\documentclass[a4paper,10pt]{amsart}

\usepackage[active]{srcltx}
\usepackage{mathrsfs}
\usepackage[all]{xy}
\usepackage{calrsfs}
\usepackage{times}
\usepackage[scaled=0.92]{helvet}
\usepackage[greek,english]{babel}
\usepackage[iso-8859-7]{inputenc}

\usepackage{amsfonts}
\usepackage{amssymb}
\usepackage[usenames]{color}
\usepackage[dvistyle]{todonotes}

\newtheorem{theorem}{Theorem}   

\newtheorem{lemma}[theorem]{Lemma}
\newtheorem{proposition}[theorem]{Proposition}

\theoremstyle{definition}
\newtheorem{definition}[theorem]{Definition}
\newtheorem{remark}[theorem]{Remark}
\newtheorem{example}[theorem]{Example}

\definecolor{MyDarkBlue}{rgb}{0,0.08,0.60}

\newcommand{\Z}{{\mathbb{Z}}}
\newcommand{\lc}{\left\lceil}
\newcommand{\rc}{\right\rceil}
\newcommand{\lf}{\left\lfloor}
\newcommand{\rf}{\right\rfloor}

\renewcommand{\mod}{{\;\rm mod}}

\title{Representations of cyclic groups in positive characteristic and Weierstrass semigroups}

\date{\today}

\author{Sotiris Karanikolopoulos }
\address{Department of Mathematics,  University of the Aegean \\
Karlovassi, 83200 Samos, Greece}
\email{mathm03005@aegean.gr}

\author{Aristides Kontogeorgis}
\address{Department of Mathematics, University of Athens \\
Panepistimioupolis, 15784 Athens, Greece
}
\email{kontogar@math.uoa.gr}

\begin{document}

\begin{abstract}
We study the $k[G]$-module structure of the space of holomorphic differentials of 
a curve defined over an algebraically closed field of positive characteristic, 
for a cyclic group $G$ of order $p^\ell n$. We also study the relation to the
Weierstrass semigroup for the case of Galois Weierstrass points. 
\end{abstract} 

\thanks{{\bf keywords:} Automorphisms, Curves, Differentials, Numerical Semigroups.
 {\bf AMS subject classification} 14H37}

\maketitle 
\section{Introduction}
Let $X$ be a projective nonsingular curve, defined over an 
algebraically closed field $K$ of positive characteristic $p$. 
The study of the curve $X$ is equivalent to the study of the 
corresponding function field $F$.

An open question in positive characteristic is the determination of the 
Galois module structure of the space of holomorphic differentials of $X$. 
This problem is still open and only some special cases are known 
\cite{Tamagawa:51},\cite{Val:82},\cite{Salvador:00},\cite{csm},\cite{borne06}
where restrictions are made either on the ramification type or on the group structure of
$G$. 
R.Valentini and M. Madan in \cite{vm} computed the Galois module structure 
of the space of holomorphic differentials for the case of a cyclic group action 
$G$, where $G$ was a cyclic $p$-group of order either prime to $p$ or a power 
of $p$. One of the aims of this paper is to extend the result of Valentini Madan to 
the more general case of a cyclic group that has order $p^\ell n$, $(n,p)=1$.
We will characterize the  indecomposable summands $V(\lambda,k)$ (see 
section \ref{notation} for a  precise definition
 in terms of the Jordan indecomposable
blocks of the generator) and we will decompose the  space $V$ of 
holomorphic differentials as:
\begin{equation} 
 \label{decompo-1}
 V:=\bigoplus_{\lambda=0}^{n-1} \bigoplus_{k=1}^{p^\ell} V(\lambda,k)^{d(\lambda,k)}.
\end{equation}
 The numbers $d(\lambda,k)$ will be described 
in terms of the ramification of the extension $F/F^G$ in theorem \ref{main-th}.  

The $G$--module structure is expressed in terms of the \textit{Boseck}  invariants.
 These are invariants introduced by Boseck \cite{boseck} coming from the construction of bases
of holomorphic differentials. 
The Boseck invariants have  rich connections with other subjects in the literature:
computation of Weierstrass points, \cite{boseck}, \cite{garciaa-s}, \cite{garciaelab}; the computation of 
the rank of the Hasse--Witt matrix,
 \cite{Madden78}; the classification of curves with certain rank of the Hasse--Witt matrix \cite{CaSa};
the study of the Artin--Schreier (sub)extensions of rational functions fields, \cite{Val-Mad:80},etc. 
Here we choose to focus only on the $G$ module structure as well as on the structure of the Weierstrass semigroup that is attached to 
a ramified point.
  
The complicated  notation needed in order to state the main results prevents us from presenting 
our main theorem here.

The paper is organized as follows: In section \ref{notation} we introduce a 
notation for the places that are ramified in extension $F/F^P/F^G$ and give 
a filtration of the module of holomorphic differentials used in the computations. 
Next section is devoted to dimension  computations  with the aid of 
Riemann-Roch formula. In the final section we see the relation to the 
Weierstrass semigroup. We tried to relate our results to known results 
in the literature. This way we discovered an inaccuracy in the work of Boseck
\cite{boseck} in the case of a $\Z/p\Z$-extension of the rational function field 
ramified above one point. 
Finally we extend results from characteristic zero relating the Galois module structure 
of the space of holomorphic differentials and the Weierstrass semigroup attached
 to a ramified point.

\section{Notation} \label{notation}

 Let $G=\langle g \rangle$ be a cyclic subgroup of automorphisms acting on the 
space of holomorphic differentials $V:=H^0(X,\Omega_X)$. 
The group $G$ can be written as a direct product of a group $T=\langle g^{p^\ell} \rangle$ of order $n$
and a  cyclic $p$-group $P=\langle g^{n} \rangle$.  
We consider the tower of function  fields $F/F^P/F^G$.
Let $np^\ell=|G|$, $(n,p)=1$ and consider a primitive $n$-th root of unity $\zeta_n\in K$.
By Jordan decomposition theory we see that we can decompose  $V$ as a direct sum 
of $K[G]$-modules $V(\lambda,k)$. The modules 
$V(\lambda,k)$ are $k$-dimensional $K$-vector spaces with basis $\{v_1,\ldots,v_k\}$
and action given by 
\begin{equation} \label{staction}
 g v_i =\zeta_n^\lambda v_i+v_{i+1} \mbox{ for all } 1 \leq i \leq k-1
\end{equation}
and 
\[
 g v_{k} =\zeta_n^\lambda v_{k}.
\]
The action of the generator $g$ on $V(\lambda,k)$ is given in 
terms of the matrix:
\[
A:=
 \begin{pmatrix}
  \zeta_n^\lambda & 1            & 0 & \cdots & 0 \\
    0       & \zeta_n^\lambda    & 1 & \ddots & \vdots \\
    \vdots  &  \ddots       &\ddots & \ddots & 0 \\
    0       & \cdots        &  0     & \zeta_n^\lambda & 1\\ 
    0       & \cdots        &   0 & 0    &  \zeta_n^\lambda
 \end{pmatrix}.
\]
Observe that for a cyclic group $G$ of order $np^\ell$ generated by $g$ the module $K[G]$ 
can be decomposed as $K[G]=\oplus_{\lambda=0}^{n-1} V(\lambda,p^\ell)$. Indeed, 
the characteristic polynomial of $g$ acting on $K[G]$ is up to $\pm 1$ equal to $x^{np^\ell}-1=(x^n-1)^{p^\ell}$, 
and every root of unity in $K$ appears as a character in $K[G]$.
\begin{remark}
 The  indecomposable $K[P]$-modules of a cyclic $p$-group of order $p^\ell$ and with generator 
$\sigma$ are given by the quotients 
$K[P]/(\sigma-1)^k$, where $k=1,\ldots,p^\ell$ \cite{vm}. In our notation these are the modules
$V(0,k)$ i.e. the indecomposable  Jordan forms of {dimension $k$}.
\end{remark}

%
\begin{proposition}
The indecomposable $K[G]$-module $V(\lambda,k)$ seen as a $K[T]$-module
is a direct sum of $k$ characters of the form $\zeta \mapsto \zeta^{p^\ell n}$.
The module $V(\lambda,k)$ seen as $K[P]$-module is indecomposable and isomorphic 
to the module $K[P]/(\sigma-1)^{k}$.   
\end{proposition}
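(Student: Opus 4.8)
The plan is to reduce the whole statement to a computation with the single matrix $A=\zeta_n^\lambda\,\Id+N$ describing the action of $g$ on $V(\lambda,k)$, where $N$ is the nilpotent matrix determined by $Nv_i=v_{i+1}$ for $1\le i\le k-1$ and $Nv_k=0$; thus $N^{k-1}\neq 0$ and $N^{k}=0$. Since in the decomposition \eqref{decompo-1} the index $k$ runs over $1,\dots,p^\ell$, we always have $k\le p^\ell$, and hence $N^{p^\ell}=0$ as well. The groups $T$ and $P$ are generated by $t=g^{p^\ell}$ and $\sigma=g^{n}$, which act on $V(\lambda,k)$ through the matrices $A^{p^\ell}$ and $A^{n}$ respectively, so it is enough to identify these two powers of $A$.

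For the $K[T]$-structure I would observe that $\Id$ and $N$ commute and generate a commutative $K$-subalgebra of $M_k(K)$, which has characteristic $p$; hence the additivity of the $p^\ell$-th power map yields $A^{p^\ell}=(\zeta_n^\lambda\,\Id)^{p^\ell}+N^{p^\ell}=\zeta_n^{\lambda p^\ell}\,\Id$, where we used $N^{p^\ell}=0$. Thus $t$ acts on $V(\lambda,k)$ as the scalar $(\zeta_n^{\lambda})^{p^\ell}$, so every line in $V(\lambda,k)$ is a $T$-submodule and $V(\lambda,k)=Kv_1\oplus\cdots\oplus Kv_k$ exhibits it as a direct sum of $k$ copies of the character $g^{p^\ell}\mapsto(\zeta_n^{\lambda})^{p^\ell}$; this is a genuine character of the cyclic group $T$ of order $n$ because $\zeta_n^{p^\ell}$ is again a primitive $n$-th root of unity, as $(p^\ell,n)=1$. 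This proves the first assertion.

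For the $K[P]$-structure I would expand $\sigma-1=A^{n}-\Id=\sum_{j\ge 1}\binom{n}{j}\zeta_n^{\lambda(n-j)}N^{j}$, a finite sum in which all terms with $j\ge k$ vanish, the constant term having disappeared because $\zeta_n^{\lambda n}=1$. The coefficient of $N$ here is $\binom{n}{1}\zeta_n^{\lambda(n-1)}=n\,\zeta_n^{\lambda(n-1)}$, which is a \emph{unit} of $K$ since $(n,p)=1$. Consequently $(\sigma-1)^{k}=0$ (it is a polynomial in $N$ all of whose monomials have degree $\ge k$) while $(\sigma-1)^{k-1}v_1=\bigl(n\,\zeta_n^{\lambda(n-1)}\bigr)^{k-1}v_k\neq 0$; hence $v_1,(\sigma-1)v_1,\dots,(\sigma-1)^{k-1}v_1$ are $K$-linearly independent and form a basis of the $k$-dimensional space $V(\lambda,k)$. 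Therefore $V(\lambda,k)$ is the cyclic $K[P]$-module generated by $v_1$ with annihilator $(\sigma-1)^{k}$, and since in characteristic $p$ one has $K[P]=K[\sigma]/(\sigma^{p^\ell}-1)=K[\sigma]/(\sigma-1)^{p^\ell}$, this gives $V(\lambda,k)\cong K[P]/(\sigma-1)^{k}$, which is indecomposable by the Remark preceding the Proposition.

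The argument is essentially formal, and I do not expect a real obstacle; the only steps that genuinely need care are the two pieces of characteristic-$p$ arithmetic. For the $T$-part one uses that the mixed binomial coefficients $\binom{p^\ell}{j}$ for $0<j<p^\ell$ vanish in $K$ — equivalently, that Frobenius is additive — so that $A^{p^\ell}$ collapses to a scalar; dually, for the $P$-part one uses that $\binom{n}{1}=n$ is \emph{invertible} in $K$, which is exactly what forces the perturbation of $N$ to retain nilpotency index $k$. It is also worth noting that the bound $k\le p^\ell$, built into \eqref{decompo-1}, is precisely what makes $N^{p^\ell}=0$, so it is needed for the first assertion. Alternatively, once one knows that $\sigma-1$ acts on $V(\lambda,k)$ with a single Jordan block of size $k$, the isomorphism with $K[P]/(\sigma-1)^{k}$ is immediate from the classification of finite-dimensional modules over the local ring $K[P]\cong K[t]/(t^{p^\ell})$.
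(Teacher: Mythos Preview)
Your proof is correct. The treatment of the $K[T]$-structure is identical to the paper's: both write $A=\zeta_n^\lambda\,\Id+N$ and use the Frobenius/freshman's-dream identity in characteristic $p$ to conclude $A^{p^\ell}=\zeta_n^{\lambda p^\ell}\,\Id$. For the $K[P]$-structure the paper takes a slightly more abstract route: rather than expanding $A^n-\Id$ explicitly, it observes that the minimal polynomial of $A^n$ is $(x-1)^d$ for some $d\le k$, and then argues that $(A^n-1)^d=0$ forces the minimal polynomial $(x-\zeta^\lambda)^k$ of $A$ to divide $(x^n-1)^d=\prod_{\zeta^n=1}(x-\zeta)^d$, whence $d=k$. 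Your direct binomial expansion, isolating the unit coefficient $n\,\zeta_n^{\lambda(n-1)}$ of $N$ in $A^n-\Id$ and reading off $(\sigma-1)^{k-1}v_1\neq 0$, is a more concrete way to reach the same conclusion; it has the mild advantage of exhibiting an explicit cyclic generator $v_1$ and making transparent exactly where the hypothesis $(n,p)=1$ enters.
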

\begin{proof}
We will use the following idea: The action of $G$ on the indecomposable summand 
$V(\lambda,k)$ is described by the action of the generator $g$ of $G$. We would like to 
view the module $V(\lambda,k)$ as a $P$ and $T$ module respectively. A generator for the $T$ group 
is given by 
$g^{p^\ell}$. Write the matrix $A$ as $A=\mathrm{diag}(\zeta^\lambda)+N$ where $N$
 is a nilpotent $k \times k$
matrix with $k \leq p^\ell$. Therefore, the generator $g^{p^\ell}$ of the $T$ group 
is given by the matrix $A^{p^\ell}=\mathrm{diag}(\zeta^{\lambda p^\ell})$. This means that 
$V(\lambda,k)$ seen as a $T$ module is decomposed as a direct sum of $k$
characters of the form $\zeta \mapsto \zeta^{\lambda p^\ell}$. 
Since $(p^\ell,n)=1$ raising an $n$-th root of unity to the $p^\ell$-power is an 
automorphism of the group of $n$-th roots of one. 

On the other hand  the action of the generator $g^n$ on the module $V(\lambda,k)$ is 
given by the $n$-th power of $A$. We observe first that that all eigenvalues of 
$A^n$ are $1$. We will prove that $A^n$ is similar to the matrix $\mathrm{Id}+N$,
i.e. a Jordan indecomposable block. Since 
all eigenvalues of $A^n$ are $1$ the characteristic polynomial of $A^n$ is 
$(x-1)^k$. The minimal polynomial of $A^n$ is $(x-1)^d$ for some integer $1 \leq d \leq k$. 
Since $A$ is an indecomposable Jordan block the minimal polynomial 
of $A$ is $(x-\zeta^\lambda)^k$. On the other hand, since $(A^n-1)^d=0$ we 
have that $(x-\zeta^\lambda)^k$ divides $(x^n-1)^d$ and this is possible only if 
$d=k$. This implies that $A^n$ is similar to an indecomposable Jordan form of dimension 
$k$. 
\end{proof}

\subsection{Fields and ramification}
\label{FR}

We will introduce some notation on the ramification 
places in the extensions $F/F^P$ and $F/F^G$.
Let us denote by $\bar{P}_1,\ldots,\bar{P}_s$ the places of $F^P$ that are ramified in $F/F^P$.
The places of $F$ that are above $\bar{P}_i$ will be denoted by $P_{i,\nu}$, $1\leq \nu \leq p^\ell/e_i$,
where $e_i=p^{\epsilon_i}$ is the common ramification index  $e(P_{i,\nu}/\bar{P}_i)$.

The different  $\mathrm{Diff}(F/F^P)$ is supported at the places $P_{i,\nu}$ while the discriminant 
$D(F/F^P)$ is  supported at the places $\bar{P}_1,\ldots,\bar{P}_s$. 
Let us denote the different exponent at each ramified place $P_j$ by $\delta_j$. The 
discriminant is then computed:
{
\[
 D(F/F^P)=\sum_{j=1}^s p^{\ell-\epsilon_j} \delta_j \bar{P}_j,
\]
while the different is given by
\[
\mathrm{Diff}(F/F^P)=\sum_{j=1}^s  \delta_j \sum_{\nu }{P}_{j,\nu}.
\]}
The cyclic group extension $F^P/F^G$ is a Kummer extension
with Galois group $T$
 and it is defined by an equation of the form:
\begin{equation} \label{def-eq-cyc}
F^P=F^G(y), \;\;\; y^n=b, \;\;\;b \in F^G.                                                                                               
\end{equation}

Let $\bar{Q}_1,\ldots,\bar{Q}_t$ be the places of $F^G$ that are ramified in extension 
$F^P/F^G$. We define  $Q_{i,\nu}$, $1\leq \nu \leq n/e_i'$ to be the places of $F^P$ 
which  are above  $\bar{Q}_i$, where $e_i'$ denotes the common ramification index, 
$e_i'=e(Q_{i,\nu}/\bar{Q}_i)$. 

Assume that the set of places $\{ \bar{Q}_1,\ldots, \bar{Q}_{t_0}\}$ extend to places 
$Q_{i,\nu}$ of $F^P$ that do not ramify on $F/F^P$ and that each place 
$\bar{Q}_i$ of the   places
$\{
\bar{Q}_{t_0+1},\ldots,\bar{Q}_t \}$ extends to places $Q_{i,\nu}$ that ramify in $F/F^P$.
The total number of places of the form 
$Q_{i,\nu}$ $t_0+1\leq i \leq t$ equals
\[
 s_0:=\sum_{i=t_0+1}^t \frac{n}{e_i'}= |\{Q_{i,\nu}: t_0+1\leq i \leq t, 1\leq \nu \leq n/e_i'  \}|.
\]
We enumerate the places $\bar{P}_i$ such that $\{\bar{P}_{s_0+1},\ldots, \bar{P}_{s} \}$
do not ramify in $F^P/F^G$ and 
$\{P_1,\ldots,P_{s_0}\}=\{Q_{i,\nu}: t_0+1\leq i \leq t, 1\leq \nu \leq n/e_i'  \}$.

\[
\xymatrix{ 
F \ar@{-}[d]_{P} & & &P_{1,\mu} \ar@{-}[d]_{e_1} \ar@{..}[r] & P_{t,\mu} \ar@{-}[d]_{e_{t}} & 
 P_{s_0+1,\nu} \ar@{-}[d]_{e_{s_0+1}} \ar@{..}[r]   
 &P_{s,\nu} \ar@{-}[d]_{e_s} \\
F^{P} \ar@{-}[d]_T & Q_{1,\nu} \ar@{-}[d]_{e_1'} \ar@{..}[r]  &Q_{t_0,\nu} \ar@{-}[d]_{e_{t_0}'} &
Q_{t_0+1,\nu} \ar@{-}[d]_{e_{t_0+1}'} \ar@{..}[r]  & Q_{t,\nu} \ar@{-}[d]_{e_t'} &
 \bar{P}_{s_0+1}  \ar@{..}[r]   & \bar{P}_s \\
F^G &   \bar{Q}_1  \ar@{..}[r]   & \bar{Q}_{t_0}  & \bar{Q}_{t_0+1}  \ar@{..}[r]  & \bar{Q}_{t} 
}
\]
We can select $b$ in eq. (\ref{def-eq-cyc}) such that \cite[sec. 2]{vm}
\begin{equation} \label{div-b}
 \mathrm{div}_{F^G}(b)=n A +\sum_{i=1}^t {\phi_i}\bar{Q}_i,
\end{equation}
where $0<\phi_i<n$, $A$ is a divisor of $F^G$. 
The ramification indices are given by   $e_i'=n/(n,\phi_i)$, and the discriminant is given by
\begin{equation} \label{diff-tame}
 D(F^P/F^G)=\sum_{i=1}^t \left(n-\frac{n}{e_i'} \right)\bar{Q}_i.
\end{equation}
We also define $\Phi_i=\phi_i/(n,\phi_i)$.

\subsection{Modules}

Let us now focus on the cyclic $p$-group extension $F/F^P$. 
The $G$-module structure on holomorphic differentials on a cyclic $p$-group is studied by 
R. Valentini and M. Madan in \cite{vm}.
Let $\sigma:=g^n$ be a generator of the cyclic group $P$. 
Recall that $V$ denotes the set of holomorphic differentials.
Following the article of Valentini Madan  we consider the set of subspaces
 $V^i \subset V$ defined by  
\[
 V^i:=\{ \omega \in V : (\sigma-1)^i \omega =0 \} \mbox{ for } i=0,\ldots, p^\ell.
\]
We compute that (the set $\{v_1,\ldots,v_m\}$ is  the basis of $V(\mu,m)$ given in eq. (\ref{staction})). 
\[
 V^{i+1} \cap V(\lambda,m) =
\left\{
\begin{array}{ll}
 V(\lambda,m) & \mbox{ if } m\leq i+1 \\
 \langle  v_{m-i},\ldots,v_{m} \rangle & \mbox{ if } m >i+1
\end{array}
\right.
\]
Since $G$ is a commutative group there is a well defined action of
 $T=\langle g^{p^\ell} \rangle$ on the 
quotient space $V^{i+1}/V^{i}$ and the natural map  
\[
 V^{i+1} \rightarrow V^{i+1}/V^i,
\]
is $T$-equivariant. The images of the spaces 
 $V^{i+1} \cap V(\lambda,m)$ 
under this map are $0$ for $m\leq i$ and are one dimensional if $m>i$.

The space $V^{i+1}/V^i$ is decomposed into characters of the group $T$. 
Let $d(\lambda,k)$ be the number of $V(\lambda,k)$ blocks in $V$. 
Let $c(\lambda,i)$, $0\leq i \leq p^\ell-1$ be the number of characters of the form $g \omega =\zeta^\lambda \omega$ in 
$V^{i+1}/V^i$.


We have that 
\[
 c(\lambda,i)=\sum_{k \geq i+1} d(\lambda,k).
\]
Therefore 
\begin{eqnarray}\label{dk}
d(\lambda,p^\ell) & = & c(\lambda,p^\ell-1) \\
d(\lambda,k) & = & c(\lambda,k-1)-c(\lambda,k). \nonumber
\end{eqnarray}

\begin{lemma} \label{properties}
 There is a basis $\{w_0,\ldots, w_{p^\ell-1}\}$ of $F$ over $F^P$ 
such that:
\begin{enumerate}
\item
For $0\leq k \leq p^\ell-1$ with $p$-adic expansion 
$k=a_1^k+a_2^k p+\cdots+a_\ell^k p^{\ell-1}$, 
we have 
\[
 (\sigma-1)^kw_k=a_1^k! a_2^k!\cdots a_n^k! w_k.
\]
 \item 
Every $\omega \in V$ can be written as 
\[
 \omega =\sum_{\nu=0}^{p^\ell-1} c_\nu w_\nu dx
\]
with $x,c_\nu \in F^P$ and with the additional 
property that 
\[
 \omega \in V^{i} \Leftrightarrow c_i=c_{i+1}=\cdots=c_{p^\ell-1}=0.
\]
\item There are numbers $\Phi(\mu,j)$ prime to $p$ such that 
\[
 v_{P_{\mu,\nu}}(w_k)=-\sum_{j=1}^\ell a_j^k \Phi(\mu,j) p^{\ell-j}.
\]

\end{enumerate}

\end{lemma}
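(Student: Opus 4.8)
The plan is to build the basis $\{w_0,\dots,w_{p^\ell-1}\}$ from the Artin–Schreier tower structure of $F/F^P$ and then verify the three properties one at a time, using the standard reduction theory of higher ramification groups for cyclic $p$-extensions following Valentini–Madan. First I would present $P=\langle\sigma\rangle$ as a tower of $\ell$ successive $\Z/p\Z$-extensions $F^P=F_0\subset F_1\subset\cdots\subset F_\ell=F$, with each $F_j/F_{j-1}$ an Artin–Schreier extension generated by some $u_j$ with $\wp(u_j)=u_j^p-u_j\in F_{j-1}$, chosen (after the usual normalization of the Artin–Schreier generator, e.g. minimizing pole orders by Hasse's lemma) so that at each ramified place the pole order of $u_j$ is prime to $p$; this is where the integers $\Phi(\mu,j)$ will come from. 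For $0\le k\le p^\ell-1$ with $p$-adic expansion $k=a_1^k+a_2^kp+\cdots+a_\ell^kp^{\ell-1}$, I would then define
\[
w_k:=u_1^{a_1^k}u_2^{a_2^k}\cdots u_\ell^{a_\ell^k}.
\]
Since the monomials $u_1^{b_1}\cdots u_\ell^{b_\ell}$ with $0\le b_j\le p-1$ form an $F^P$-basis of $F$, the $w_k$ form a basis of $F$ over $F^P$.

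For part (1), I would compute the action of $\sigma-1$ on each $u_j$. Writing $\sigma$ in terms of the Artin–Schreier generators, one gets $\sigma(u_j)=u_j+c_j+(\text{lower-level corrections})$ for suitable constants $c_j\in\Fp$; after adjusting the generators so that $\sigma(u_\ell)=u_\ell+1$ and inductively controlling the action on $u_{j}$ modulo $F_{j}$, the operator $\sigma-1$ behaves on the monomial $w_k$ like a product of binomial-coefficient factors. The key combinatorial identity is that applying $(\sigma-1)$ a total of $k$ times to $w_k$ picks out, digit by digit, the factor $a_j^k!$ from the binomial expansion of $(u_j+1)^{a_j^k}$ — precisely because $a_j^k\le p-1$ so no carrying occurs and the factorials are units in $K$. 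This is really a restatement of the fact that $(\sigma-1)^k$ acts as a nonzero scalar on the "top" monomial exactly when $k=\sum a_j^kp^{j-1}$ with all digits extremal-used; I would phrase it as an induction on $\ell$, the base case $\ell=1$ being the classical computation $(\sigma-1)^a u^a=a!\,u^a$ in a $\Z/p\Z$-extension.

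For part (3), the valuation $v_{P_{\mu,\nu}}(w_k)=\sum_j a_j^k\, v_{P_{\mu,\nu}}(u_j)$ is additive, and in the tower the place $P_{\mu,\nu}$ lies over a place of $F_{j-1}$ with ramification index a power of $p$, so $v_{P_{\mu,\nu}}(u_j)=-\Phi(\mu,j)p^{\ell-j}$ where $\Phi(\mu,j)$ is (prime to $p$ by the normalization) the pole order of $u_j$ measured at the appropriate intermediate place times the residual ramification — I would make the bookkeeping of ramification indices explicit using the notation $e_i=p^{\epsilon_i}$ from subsection \ref{FR}. For part (2), given $\omega\in V$ write $\omega=f\,dx$ with $f\in F$, $x\in F^P$ (possible since $F/F^P$ is separable so $dx\ne 0$ for suitable $x\in F^P$), expand $f=\sum_\nu c_\nu w_\nu$ with $c_\nu\in F^P$; the filtration statement $\omega\in V^i\Leftrightarrow c_i=\cdots=c_{p^\ell-1}=0$ follows by applying $(\sigma-1)^i$, using part (1): $(\sigma-1)^i$ kills $w_k dx$ for $k<i$ and sends $w_k dx$ (for $k\ge i$) to a nonzero $F^P$-combination of the $w_m dx$ with $m\le k$, and one reads off the triangular system. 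The main obstacle I anticipate is part (1): getting the Artin–Schreier generators $u_j$ normalized \emph{simultaneously} so that (a) the $\sigma$-action is upper-triangular with the clean $+1$ on the top generator, (b) the pole orders $\Phi(\mu,j)$ stay prime to $p$ at every ramified place, and (c) the factorial identity comes out with no carrying — reconciling these three requirements is the delicate step, and it is essentially the content of Valentini–Madan's \cite{vm} construction that I would adapt rather than reprove from scratch.
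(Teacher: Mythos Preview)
Your approach is the same as the paper's: both rest on Valentini--Madan's construction in \cite{vm} of the $w_k$ as monomials $u_1^{a_1^k}\cdots u_\ell^{a_\ell^k}$ in the Artin--Schreier tower generators, and the paper's proof is actually briefer than your sketch --- it simply cites \cite{vm} for each of the three assertions and stops.

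One small correction to flag: your base-case identity $(\sigma-1)^a u^a = a!\,u^a$ is not right. For instance with $\sigma(u)=u+1$ one has $(\sigma-1)u^2 = 2u+1$ and $(\sigma-1)^2 u^2 = 2$, not $2u^2$. In general $(\sigma-1)^k w_k$ is the \emph{constant} $a_1^k!\cdots a_\ell^k!$ (a multiple of $w_0=1$), not of $w_k$; indeed $(\sigma-1)$ is nilpotent on $F$ over $F^P$, so no $w_k$ with $k\geq 1$ can be an eigenvector with nonzero eigenvalue. This slip appears to be inherited from a typo in the lemma's statement itself, and it does not damage your argument for part~(2), which only uses that $(\sigma-1)^k w_k\neq 0$ while $(\sigma-1)^{k+1}w_k=0$ together with the triangular shape of the $(\sigma-1)$-action on the $w_k$.
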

\begin{proof}
 The definition of the basis is given in \cite[p.108]{vm}
while the second assertion is proved in the same article in the proof 
of theorem 1.  The existence of the numbers $\Phi(\mu,j)$ follows by the 
construction of the extension $F/F^P$ in terms of successive Artin-Schreier 
extensions (see \cite[sec. 1]{vm}).
\end{proof}

Define the integers:
\begin{equation}\label{defnu}
 \nu_{\mu,k}:=\lf  \frac{\delta_\mu+v_{P_{\mu,\nu}}(w_k)}{e_\mu}\rf.
\end{equation}
Notice that the valuation  $v_{P_{\mu,\nu}}(w_k)$ does not depend on the selection of 
the place $P_{\mu,\nu}$ over $\bar{P}_\mu$.

Let $\tau=g^{p^\ell}$ be a generator of the cyclic group $T$. Assume that $\tau y=\zeta_n^r y$.
For each $\lambda=0,\ldots,n-1$ we select $0\leq \alpha_\lambda \leq n-1$ such that 
\begin{equation} \label{akdef}
 r \alpha_\lambda \equiv \lambda \mod n.
\end{equation}

\begin{definition}\label{defBoseck}
Define the Boseck invariants:
\[
 \Gamma_{k,\lambda}:=
\sum_{i=1}^{t} \left\langle -\frac{\alpha_\lambda \Phi_i}{e_i'}\right\rangle+
\sum_{j=t_0+1}^{t}
\lf
\left\langle
\frac{\alpha_\lambda \Phi_j -1}{e_j'}
\right\rangle
+
\frac{\nu_{j,k}}{n}
\rf
+
\sum_{\mu=1}^{s-t+t_0}
\lf
\frac{\nu_{\mu,k}}{n}
\rf
\].
%
\end{definition}

\begin{remark}
 If $n=1$, then 
\[
 \Gamma_{k,\lambda}= \Gamma_{k}= \sum_{\mu=1}^s \nu_{\mu,k}.
\]
This is the Boseck invariant for the $p$ cyclic case, see \cite{boseck} and  \cite{karan}.

If $p^\ell=1$ then 
\[
\Gamma_{k,\lambda}= \Gamma_{\lambda}= \sum_{j=1}^{t}
\left\langle-
\frac{\alpha_\lambda \Phi_j}{e_j'}
\right\rangle.
\]
This is the Boseck invariant for the the cyclic tame case. 
These invariants coincide with the ones introduced by \cite{boseck}, and used by \cite{karan},  after letting  $r=1$,
to eq. (\ref{akdef}) (this can be done without loss of the generality).
\end{remark}
In next section we will prove the following:
\begin{proposition} \label{pro3}
Recall that $\epsilon_i$ are integers such that $p^{\epsilon_i}=e(P_{i,\nu}/\bar{P}_i)$.
Consider the integer  $r=\ell-\max{\epsilon_i}$.  
{ For $0\leq k< p^\ell-p^r$}, we have
\[
 c(\lambda,k)=g_{F^G}-1+ \Gamma_{k,\lambda} +\Lambda_{k,\lambda}.
\]
The integer $\Lambda_{k,\lambda}$ is given by the following rule:
If
$
\Gamma_{k,\lambda}=0
$
then  $\Lambda_{k,\lambda}=1$. In all other cases $\Lambda_{k,\lambda}=0$.


{
For  $ p^\ell-p^r \leq k \leq p^\ell -1$ we have
\[
c(\lambda,k)=
\left\{
\begin{array}{ll}
 \frac{1}{p^r} \left(g_{E_r^T}-1+ \Gamma_{k.\lambda}
 \right)
& \mbox{ if } k\geq p^\ell-p^r +1  \mbox{ or } \lambda\neq 0 \\
g_{F^G}
&
\mbox{ if } k=p^\ell-p^r \mbox{ and } \lambda =0.
\end{array}
\right.
\]
}

\end{proposition}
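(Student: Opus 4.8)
The plan is to compute $c(\lambda,k)$, the number of characters $g\omega=\zeta_n^\lambda\omega$ appearing in the $T$-module $V^{k+1}/V^k$, by producing an explicit basis of that quotient space out of differentials written in the normal form of Lemma \ref{properties}(2), and then counting dimensions with Riemann--Roch. First I would use part (2) of Lemma \ref{properties}: a differential $\omega=\sum_{\nu=0}^{p^\ell-1}c_\nu w_\nu\,dx$ lies in $V^{k+1}\setminus V^k$ exactly when $c_{k}\ne 0$ and $c_{k+1}=\cdots=c_{p^\ell-1}=0$, so the image of $\omega$ in $V^{k+1}/V^k$ is determined by the single coordinate $c_k w_k\,dx$ modulo lower-index terms. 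Since $g$ acts on $w_k$ through the Artin--Schreier data of part (3) and on $dx$ and the $c_\nu\in F^P$ through the Kummer action of $T$ on $F^P/F^G$ (via $\tau y=\zeta_n^r y$), the character by which $g$ acts on the class of $c_k w_k\,dx$ is pinned down by how $c_k\in F^P$ transforms; solving $r\alpha_\lambda\equiv\lambda\bmod n$ as in (\ref{akdef}) isolates the $\alpha_\lambda$-eigenspace. Thus $c(\lambda,k)=\dim_K$ of the space of $c\in F^P$ on which $\tau$ acts by $\zeta_n^{-\alpha_\lambda\cdot(\text{something})}$ and such that $c\,w_k\,dx$ is holomorphic, i.e. $\mathrm{div}(c)+\mathrm{div}(w_k\,dx)\ge 0$.

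Next I would translate the holomorphy condition into a divisor inequality on $F^G$. Write $c$ in terms of a function on $F^G$ times the appropriate power of $y$ to land in the $\alpha_\lambda$-eigenspace; then $\mathrm{div}_{F^G}(c\,w_k\,dx)\ge 0$ becomes the statement that a certain function on $F^G$ is a section of a line bundle whose degree I compute place-by-place using: (i) $\mathrm{div}_{F^G}(b)$ from (\ref{div-b}) and $e_i'=n/(n,\phi_i)$, $\Phi_i=\phi_i/(n,\phi_i)$ for the tame part $F^P/F^G$; (ii) the different exponents $\delta_\mu$ and $v_{P_{\mu,\nu}}(w_k)=-\sum_j a_j^k\Phi(\mu,j)p^{\ell-j}$ from Lemma \ref{properties}(3) for the wild part $F/F^P$, pushed down via the floor operation, which is exactly how $\nu_{\mu,k}=\lfloor(\delta_\mu+v_{P_{\mu,\nu}}(w_k))/e_\mu\rfloor$ in (\ref{defnu}) enters; (iii) the canonical divisor of $F^G$ contributing $2g_{F^G}-2$. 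Collecting the fractional-part contributions $\langle -\alpha_\lambda\Phi_i/e_i'\rangle$ from the $Q_i$ that are unramified above in $F$, the mixed terms $\lfloor\langle(\alpha_\lambda\Phi_j-1)/e_j'\rangle+\nu_{j,k}/n\rfloor$ from the $Q_j$ that ramify in $F/F^P$, and the pure wild terms $\lfloor\nu_{\mu,k}/n\rfloor$ from the remaining ramified $\bar P_\mu$, the degree of the relevant divisor on $F^G$ comes out to $2g_{F^G}-2+$ (twice) these sums, so Riemann--Roch gives $\ell(D)-\ell(K-D)=\deg D+1-g_{F^G}=g_{F^G}-1+\Gamma_{k,\lambda}$.

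The term $\Lambda_{k,\lambda}$ is the correction for the Riemann--Roch error term $\ell(K-D)$. When $\Gamma_{k,\lambda}>0$ the divisor $D$ has degree $>2g_{F^G}-2$, so $\ell(K-D)=0$ and $c(\lambda,k)=g_{F^G}-1+\Gamma_{k,\lambda}$; the only way $\ell(K-D)$ can be nonzero in the range $0\le k<p^\ell-p^r$ is the borderline case $\Gamma_{k,\lambda}=0$, where $D$ is (equivalent to) $K$ minus an effective divisor of the right shape and one checks $\ell(K-D)=1$, forcing $\Lambda_{k,\lambda}=1$, while for $\Gamma_{k,\lambda}\ge 1$ one has $\Lambda_{k,\lambda}=0$. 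I expect the main obstacle to be precisely the bookkeeping that shows $\ell(K-D)$ cannot exceed these stated values for all intermediate $k$ — i.e. ruling out accidental extra sections — and handling the boundary range $p^\ell-p^r\le k\le p^\ell-1$: there the relevant subfield is not $F^G$ but the fixed field $E_r^T$ of the subextension cut out by the $r$-th Artin--Schreier layer, so one re-runs the Riemann--Roch computation there (getting the factor $1/p^r$ and genus $g_{E_r^T}$), and the genuinely special case $k=p^\ell-p^r$, $\lambda=0$ must be isolated by hand, where the holomorphy condition degenerates and the count becomes $g_{F^G}$ rather than the generic formula. The delicate points are thus (a) verifying the eigencharacter computation is uniform across all places including the $t_0$ split/unramified ones, and (b) the careful case analysis of the Riemann--Roch defect at the two boundaries $\Gamma_{k,\lambda}=0$ and $k=p^\ell-p^r$.
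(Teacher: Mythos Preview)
Your treatment of the range $0\le k<p^\ell-p^r$ is essentially the paper's own argument: identify the class in $V^{k+1}/V^k$ with $c_k w_k\,dx$, write $c_k=h\,y^{\alpha_\lambda}$ with $h\in F^G$, and recognise the resulting space as $L_{F^G}(W+E_{k,\lambda})$ for an explicit effective divisor $E_{k,\lambda}$ of degree $\Gamma_{k,\lambda}$ (this is the paper's Lemmas~\ref{le:6}, \ref{degE}, \ref{degE_k} and Proposition~\ref{comp-c}). Your worry about ``ruling out accidental extra sections'' dissolves once you note, as the paper does, that $E_{k,\lambda}$ is \emph{effective}: then $\deg E_{k,\lambda}>0$ forces $\ell(-E_{k,\lambda})=0$, while $\deg E_{k,\lambda}=0$ forces $E_{k,\lambda}=0$ itself and hence $\ell(-E_{k,\lambda})=\ell(0)=1$. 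No further case analysis is needed.

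The genuine gap is in your handling of $p^\ell-p^r\le k\le p^\ell-1$. ``Re-running the Riemann--Roch computation on $E_r^T$'' is not what happens, and in fact does not produce the factor $1/p^r$. For $k$ in this range the Valentini--Madan dimension formula for $V^{k+1}/V^k$ used in Case~1 is no longer available (indeed $\nu_{\mu,k}=0$ for all $\mu$, so the naive continuation of your Case~1 argument would give the wrong genus). The paper instead invokes two separate ingredients: first the Valentini--Madan isomorphism
\[
(\sigma-1)^{k_0}:V^{k+1}/V^k\;\xrightarrow{\;\sim\;}\;V_{E_r}^{k_1+1}/V_{E_r}^{k_1},\qquad k=k_1+k_0,\ k_0=p^\ell-p^r,
\]
which transports the problem to the holomorphic differentials of the intermediate field $E_r$; and second Lemma~\ref{TamagawaREP}, the Tamagawa--Kani structure theorem for the \emph{tamely ramified} extension $E_r/F^G$, which says that apart from a single trivial summand the $K[G/\langle g^n\rangle(P)]$-module $H^0(E_r,\Omega)$ is free, with each indecomposable block of dimension $p^r$. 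It is this block size, not a Riemann--Roch index, that produces the $1/p^r$ in
\[
d^*(\lambda,p^r)=\frac{1}{p^r}\Bigl(g_{E_r^T}-1+\sum_i\bigl\langle -\alpha_\lambda\Phi_i/e_i'\bigr\rangle\Bigr),
\]
and the exceptional value $c(0,p^\ell-p^r)=g_{F^G}$ comes from the extra one-dimensional summand $d^*(0,1)=1$ in Tamagawa's decomposition. Your sketch does not mention either the $(\sigma-1)^{k_0}$ isomorphism or the appeal to Lemma~\ref{TamagawaREP}, and without them the formulas in the second half of the proposition cannot be derived.
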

This will allow us to see:
\begin{theorem} \label{main-th} 
If $r=0$ then 
\begin{equation}\label{th1}
 d(\lambda,p^\ell)=g_{F^G}-1+\Gamma_{p^\ell,\lambda}+\Lambda_{p^\ell,\lambda}.
\end{equation}
For { all the values of $r$ and for} $k< p^\ell-p^r$ we have:
\[
 d(\lambda,k)=\Gamma_{k-1,\lambda}-\Gamma_{k,\lambda}+M_{k,\lambda},
\]
where $\{-1,0,1 \} \ni M_{k,\lambda}:=\Lambda_{k-1,\lambda}-\Lambda_{k,\lambda}$.

If $k=p^\ell-p^r$ then $k-1=p^\ell-p^r-1$ and 
\begin{eqnarray*}
 d(\lambda,p^\ell-p^r)&=&g_{F^G}-1+\Gamma_{k-1,\lambda}+\Lambda_{k-1,\lambda}-c(\lambda,p^\ell-p^r)\\
 &=&\left\{
\begin{array}{ll}
 \Gamma_{k-1,\lambda}-\frac{1}{p^r}\Gamma_{k,\lambda}+\Lambda_{k-1,\lambda}, &  \mbox{ if }\lambda \neq 0 \\
\Gamma_{k-1,0}+\Lambda_{k-1,0}-1, &\mbox{ if } \lambda=0
\end{array}
\right.
\end{eqnarray*}

For { $r\neq 0$ and} $p^\ell-p^r \leq k \leq p^\ell -1$, we have:
\begin{equation}\label{th2}
 d(\lambda,k)=\left\{
\begin{array}{ll}
 1 & \mbox{if } k=p^\ell-p^r+1 ,\lambda=0 
\\
 \frac{1}{p^r}\left(g_{E_r^T}-1 +\Gamma_{k, \lambda}\right)
 &  \mbox{if } k=p^\ell \\
 0 & \mbox{otherwise}
\end{array}
\right.
\end{equation}
\end{theorem}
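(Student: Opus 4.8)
\textbf{Proof proposal for Theorem \ref{main-th}.}

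The plan is to derive everything directly from Proposition \ref{pro3} together with the relations in \eqref{dk}, which already express the multiplicities $d(\lambda,k)$ as differences of the numbers $c(\lambda,k-1)$ and $c(\lambda,k)$. So the work is purely bookkeeping: substitute the piecewise formula for $c(\lambda,k)$ from Proposition \ref{pro3} into \eqref{dk}, being careful at the three ``regime boundaries'': the top block $k=p^\ell$ (relevant when $r=0$, where Proposition \ref{pro3} gives $c(\lambda,p^\ell-1)$ in the range $k<p^\ell-p^r=p^\ell-1$, so the formula $d(\lambda,p^\ell)=c(\lambda,p^\ell-1)$ applies directly), the generic range $k<p^\ell-p^r$, and the tail range $p^\ell-p^r\le k\le p^\ell-1$.

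First I would dispose of the case $r=0$: here $p^\ell-p^r=p^\ell-1$, so $c(\lambda,p^\ell-1)$ is computed by the first branch of Proposition \ref{pro3}, namely $g_{F^G}-1+\Gamma_{p^\ell-1,\lambda}+\Lambda_{p^\ell-1,\lambda}$; but from \eqref{dk} we have $d(\lambda,p^\ell)=c(\lambda,p^\ell-1)$, and noting $\Gamma_{p^\ell-1,\lambda}=\Gamma_{p^\ell,\lambda}$ (the Boseck invariant for index $p^\ell-1$ agrees with the convention used to define $\Gamma_{p^\ell,\lambda}$, since $k$ only appears through the $\nu_{\mu,k}$'s and the $p$-adic digits of $p^\ell-1$ are all $p-1$), we obtain \eqref{th1}. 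Next, for $k<p^\ell-p^r$ both $c(\lambda,k-1)$ and $c(\lambda,k)$ are given by the first branch, and subtracting gives $d(\lambda,k)=\bigl(g_{F^G}-1+\Gamma_{k-1,\lambda}+\Lambda_{k-1,\lambda}\bigr)-\bigl(g_{F^G}-1+\Gamma_{k,\lambda}+\Lambda_{k,\lambda}\bigr)=\Gamma_{k-1,\lambda}-\Gamma_{k,\lambda}+M_{k,\lambda}$ with $M_{k,\lambda}:=\Lambda_{k-1,\lambda}-\Lambda_{k,\lambda}\in\{-1,0,1\}$, since each $\Lambda\in\{0,1\}$.

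For the boundary index $k=p^\ell-p^r$ I would use $d(\lambda,k)=c(\lambda,k-1)-c(\lambda,k)$ where now $k-1=p^\ell-p^r-1<p^\ell-p^r$ falls in the first branch while $c(\lambda,k)$ falls in the second (tail) branch of Proposition \ref{pro3}; splitting on $\lambda$: for $\lambda\neq 0$ one gets $c(\lambda,k)=\frac{1}{p^r}(g_{E_r^T}-1+\Gamma_{k,\lambda})$ and substituting yields $d(\lambda,p^\ell-p^r)=g_{F^G}-1+\Gamma_{k-1,\lambda}+\Lambda_{k-1,\lambda}-\frac{1}{p^r}(g_{E_r^T}-1+\Gamma_{k,\lambda})$, which I would simplify using the expected genus relation $g_{F^G}-1=\frac{1}{p^r}(g_{E_r^T}-1)$ coming from the Hurwitz formula for the (unramified outside the prescribed places) degree-$p^r$ cover implicit in the definition of $E_r^T$ — this identity is the crux that produces the clean form $\Gamma_{k-1,\lambda}-\frac{1}{p^r}\Gamma_{k,\lambda}+\Lambda_{k-1,\lambda}$; for $\lambda=0$ one uses instead $c(0,p^\ell-p^r)=g_{F^G}$, giving $d(0,p^\ell-p^r)=g_{F^G}-1+\Gamma_{k-1,0}+\Lambda_{k-1,0}-g_{F^G}=\Gamma_{k-1,0}+\Lambda_{k-1,0}-1$. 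Finally, for $r\neq 0$ and $p^\ell-p^r\le k\le p^\ell-1$ both $c$-values lie in the tail branch: for $k\ge p^\ell-p^r+2$ (and also $k=p^\ell-p^r+1$ with $\lambda\neq0$) the two expressions $\frac{1}{p^r}(g_{E_r^T}-1+\Gamma_{\bullet,\lambda})$ differ only by $\frac{1}{p^r}(\Gamma_{k-1,\lambda}-\Gamma_{k,\lambda})$, and since in this tail range the $p$-adic digits involved force $\Gamma_{k-1,\lambda}=\Gamma_{k,\lambda}$ (the relevant $\nu_{\mu,k}$ stabilise because $k$ and $k-1$ share their high-order digits once $k>p^\ell-p^r$), this difference vanishes, giving $d(\lambda,k)=0$; the single exceptional case $k=p^\ell-p^r+1,\lambda=0$ is where $c(0,p^\ell-p^r)=g_{F^G}$ meets $c(0,p^\ell-p^r+1)=\frac{1}{p^r}(g_{E_r^T}-1+\Gamma_{k,0})$, and using the genus relation again together with $\Gamma_{p^\ell-p^r+1,0}=0$ (which must be checked from the definition: the digit pattern of $p^\ell-p^r+1$ makes the relevant fractional parts vanish) yields $d(0,p^\ell-p^r+1)=g_{F^G}-\frac{1}{p^r}(g_{E_r^T}-1)-\frac{1}{p^r}\Gamma=g_{F^G}-(g_{F^G}-1)=1$; and $k=p^\ell$ is handled as in the $r=0$ case via $d(\lambda,p^\ell)=c(\lambda,p^\ell-1)$, now landing in the tail branch to give $\frac{1}{p^r}(g_{E_r^T}-1+\Gamma_{p^\ell,\lambda})$.

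The main obstacle I anticipate is not any of the substitutions themselves but rather justifying the two ``stabilisation'' facts used above: (i) the genus identity $p^r(g_{F^G}-1)=g_{E_r^T}-1$ for the intermediate field $E_r^T$ (which should follow from Proposition \ref{pro3} applied with $\lambda$ generic, or be pinned down from the Hurwitz genus formula given that the relevant subcover is tame/controlled), and (ii) the claim that $\Gamma_{k,\lambda}$ is constant in $k$ on the tail range $p^\ell-p^r\le k\le p^\ell$ and that $\Gamma_{p^\ell-p^r+1,0}=0$ — both reduce to an elementary analysis of how the integers $\nu_{\mu,k}=\lfloor(\delta_\mu+v_{P_{\mu,\nu}}(w_k))/e_\mu\rfloor$ depend on the $p$-adic digits $a_j^k$ of $k$ via Lemma \ref{properties}(3), but the inequalities $e_\mu=p^{\epsilon_\mu}\le p^{\ell-r}=p^{\max\epsilon_i}$ must be tracked carefully to see that the floor functions do not move. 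Everything else is a routine insertion of Proposition \ref{pro3} into the telescoping identity \eqref{dk}.
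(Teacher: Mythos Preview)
Your approach is exactly the paper's: the proof given there is the single sentence ``The proof is a simple application of proposition \ref{pro3}'', and you have correctly unpacked what that application entails via \eqref{dk}. One small slip: for $r=0$ the index $k=p^\ell-1$ actually lies in the \emph{second} (tail) branch of Proposition \ref{pro3}, not the first---but since $E_0=F^P$ and hence $E_0^T=F^G$, the two formulas coincide and nothing is lost. Your two anticipated obstacles are precisely the points the paper addresses outside the one-line proof: the stabilisation of $\Gamma_{k,\lambda}$ on the tail range is the content of the remark after Case~2 (namely $\nu_{\mu,k}=0$ for $k\ge p^\ell-p^r$, citing \cite{vm}), and the genus identity $g_{E_r^T}-1=p^r(g_{F^G}-1)$ follows from Riemann--Hurwitz for the unramified degree-$p^r$ extension $E_r^T/F^G$, just as you surmised.
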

\begin{proof}
 The proof is a simple application of proposition \ref{pro3}.
\end{proof}

\subsection{Computation of $\mathbf{c(\lambda,k)}$}
This section is devoted to the proof of proposition \ref{pro3}.

\begin{lemma} \label{TamagawaREP}
 Let $G$ be a group of order $p^\ell n$  acting on the curve $X$, with only tame ramification,
 i.e. every point that 
is ramified has decomposition group $G(P) \subset \langle g^{p^\ell} \rangle$.
Let $T=\langle g^{p^\ell} \rangle$ be the tame cyclic part of the group $G$. 
Consider the integers $\phi_i,\Phi_i,\alpha_\lambda,e_i'$ describing the Kummer extension 
$F/F^T$ and let $g_{F^T}$ denote the genus of $F^T$.
Then the decomposition of the space $V$ of holomorphic differentials is given by
\[
 V:=\bigoplus_{k=1}^{p^\ell} \bigoplus_{\lambda=0}^{n-1} V(\lambda,k)^{d^*(\lambda,k)},
\]
where 
$d^*(0,1)=1$,
 \[
  d^*(\lambda,p^\ell)=\frac{1}{p^\ell}\left(g_{F^T}-1+\sum_{i=1}^t \left\langle \frac{-\alpha_\lambda \Phi_i}{e_i'}\right\rangle\right)
 \]
and $d^*(\lambda,k)=0$ in all other cases. 
%
%
\end{lemma}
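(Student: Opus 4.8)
The plan is to reduce Lemma \ref{TamagawaREP} to the tame special case of the general machinery already set up, and in particular to Proposition \ref{pro3} with $\ell$ replaced by the value for which $P$ is trivial. When the ramification is tame, every decomposition group lies in $T=\langle g^{p^\ell}\rangle$, so the cyclic $p$-part $P$ acts freely on $X$; consequently the extension $F/F^P$ is unramified, all the different exponents $\delta_j$ vanish, and there are no places of the type $P_{i,\nu}$ with wild ramification. First I would record what this does to the invariants: with $\delta_\mu=0$ and $v_{P_{\mu,\nu}}(w_0)=0$ (taking $k=0$, which corresponds to the block of maximal dimension $p^\ell$ via $d(\lambda,p^\ell)=c(\lambda,p^\ell-1)$ after the shift), the integers $\nu_{\mu,k}$ of \eqref{defnu} all collapse, and the last two sums in Definition \ref{defBoseck} disappear because $t_0=t$ (no ramified place of $F^G$ extends to a wildly ramified place of $F$) and $s=t-t_0$ so the range of $\mu$ is empty. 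Hence $\Gamma_{k,\lambda}$ degenerates to $\sum_{i=1}^t\langle -\alpha_\lambda\Phi_i/e_i'\rangle$, exactly the tame Boseck invariant recorded in the Remark following Definition \ref{defBoseck}.

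Next I would handle the genus bookkeeping. Since $F/F^P$ is unramified of degree $p^\ell$, the Hurwitz formula gives $g_F-1=p^\ell(g_{F^P}-1)$; and since $F^P/F^G$ is the tame Kummer extension with discriminant \eqref{diff-tame}, we get the usual relation between $g_{F^P}$ and $g_{F^G}$. But more to the point: in the tame case $F^P$ plays the role that $F^T$ plays in the statement (indeed $P$ acts freely, so $F^P=F^{/P}$ has a residual $T$-action and $F^{P}{}^{T}=F^G$), and the field $E_r^T$ appearing in Proposition \ref{pro3} is, for $r=\ell$ (which is forced here because $\max\epsilon_i=0$, so $r=\ell-\max\epsilon_i=\ell$), simply $F^T=F^P$. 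So the range ``$p^\ell-p^r\le k\le p^\ell-1$'' of Proposition \ref{pro3} is empty (it would require $0\le k\le p^\ell-1$ with $k\ge p^\ell-p^\ell=0$, i.e.\ all $k$, but the first case there needs $k\ge p^\ell-p^r+1=1$ while the genuinely new phenomena sit at $k=p^\ell$); the clean way to say it is that when $P$ is trivial as an \emph{inertia} group the only surviving block dimension is $k=p^\ell$ for $\lambda\neq 0$ and the trivial block for $\lambda=0$. I would therefore invoke Proposition \ref{pro3} at $k=p^\ell-1$ (so that $c(\lambda,p^\ell-1)=d(\lambda,p^\ell)$ via \eqref{dk}), substitute the degenerate $\Gamma$ and the genus identity $g_{E_\ell^T}=g_{F^P}=g_{F^T}$, and read off
\[
d^*(\lambda,p^\ell)=\frac1{p^\ell}\Bigl(g_{F^T}-1+\sum_{i=1}^t\Bigl\langle\frac{-\alpha_\lambda\Phi_i}{e_i'}\Bigr\rangle\Bigr),
\]
together with the correction $d^*(0,1)=1$ coming from the $\Lambda$-term (the trivial one-dimensional summand generated by a nonzero global differential when $g_{F^G}\ge 1$, or more precisely the standard ``$+1$'' adjustment on the $\lambda=0$ character), and $d^*(\lambda,k)=0$ otherwise.

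The step I expect to be the main obstacle is justifying cleanly that \emph{all} intermediate-dimension blocks vanish, i.e.\ that $d^*(\lambda,k)=0$ for $1\le k\le p^\ell-1$ with $(\lambda,k)\neq(0,1)$. Conceptually this is clear — since $P$ acts freely, $V$ is a free $K[P]$-module up to the contribution of $H^1$, and a free $K[P]$-module decomposes entirely into blocks of dimension $p^\ell$ — but matching that against the $c(\lambda,k)$-formalism of Proposition \ref{pro3} requires checking that $c(\lambda,k)=c(\lambda,k')$ for all $k,k'$ in the relevant range (so that the differences in \eqref{dk} telescope to zero), which in turn follows from $\Gamma_{k,\lambda}$ being independent of $k$ in the tame case — true because $\nu_{\mu,k}$ no longer appears. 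I would also need to treat the exceptional $(0,1)$-summand carefully: it is the one place where the $K[P]$-freeness fails (the $H^0/H^1$ discrepancy), and its multiplicity being exactly $1$ is precisely the statement $d^*(0,1)=1$; I would pin this down either by a direct dimension count using Riemann--Roch on $F^G$ (the trivial character of $T$ contributes $g_{F^G}$ holomorphic differentials to $V^1$, of which the ``extra'' one beyond the free part accounts for the isolated block) or by citing the $n=1$, $p$-group sub-case of Valentini--Madan together with the tame case of \cite{boseck}, which together force this normalization.
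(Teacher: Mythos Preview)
Your approach is circular. In the paper's architecture, Lemma~\ref{TamagawaREP} is proved \emph{before} Proposition~\ref{pro3} and is used as an input to it: Case~2 of the proof of Proposition~\ref{pro3} (the range $p^\ell-p^r\le k\le p^\ell-1$) is handled precisely by applying Lemma~\ref{TamagawaREP} to the auxiliary extension $E_r/F^G$. In the tame situation you have $r=\ell$, so $p^\ell-p^r=0$ and \emph{every} value of $k$ falls into Case~2; Case~1 ($k<0$) is empty, not the other way around as you write. Thus the only part of Proposition~\ref{pro3} that applies is exactly the part whose proof rests on the lemma you are trying to establish. You cannot bootstrap from it.

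There is also a field identification error: when $r=\ell$ the wild decomposition group at any place is trivial, so $E_r=F$, hence $E_r^T=F^T$, which is \emph{not} $F^P$ (these coincide only when $n=1$ or $p^\ell=1$). The paper's proof avoids all of this by arguing directly: Tamagawa's theorem for the unramified $p$-group action of $P=\langle g^n\rangle$ on $X$ gives $V\cong K\oplus K[P]^{g_{F^P}-1}$, which immediately forces every indecomposable summand to have dimension $1$ or $p^\ell$ and pins down $d^*(0,1)=1$. Then the multiplicities $d^*(\lambda,p^\ell)$ are extracted by computing the $T$-character multiplicities $m_\lambda$ via the Valentini--Madan formula for the Kummer extension $F/F^T$ and dividing by $p^\ell$ (since each $V(\lambda,p^\ell)$ carries the character $\zeta^\lambda$ with multiplicity $p^\ell$). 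Your final parenthetical remark about citing Valentini--Madan plus a free-$K[P]$-module observation is essentially this argument; that is the line you should develop, not the appeal to Proposition~\ref{pro3}.
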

\begin{proof}
 Group actions  on curves without branched points on spaces of 
holomorphic differentials were studied by T. Tamagawa \cite{Tamagawa:51}.
 Tamagawa proved that the space of holomorphic differentials is decomposed as 
\[
 V:=K \oplus K[P]^{g_{X/P}-1},
\]
where $g_{X/P}$ is the genus of the quotient curve $X/P$. 

Actions with tame ramification where studied by E. Kani \cite{Kani:86}. Kani proved
that:
\[
 V:=K \oplus K[G]^{g_{X/G}-1} \oplus \tilde{R}^*_G,
\]
where $ \tilde{R}^*_G$ is a $k[G]$-module such that $n  \tilde{R}^*_G=R_G^*$ and 
$R_G^*$ is a the contragredient module of the tame ramification module (for precise definition see
\cite[sec. 1]{Kani:86}).

The result of Tamagawa for the action of the $p$-group $P=\langle g^n \rangle$ 
gives that 
\begin{equation} \label{TamagawaComp}
 V=K\oplus \bigoplus_{\lambda=0}^{n-1} V(\lambda,p^\ell)^{d^*(\lambda,p^\ell)}.
\end{equation}
The integers $d^*(\lambda,p^\ell)$ can be computed by a careful look at the definition of the 
tame ramification module. We will instead  compute them using the results of Valentini-Madan 
for the extension $F/F^T$, $T={\langle g^{p^\ell} \rangle}$.

The extension $F/F^T$ is a cyclic Kummer extension with 
Galois group generated by $\sigma=g^{p^\ell}$ and it is 
characterized by the integers $\phi,\Phi,e_i',\alpha_\lambda$ introduced in section \ref{FR}.
For the module of holomorphic differentials
the multiplicities $m_\lambda $ of the character $\lambda$ given by the action $\sigma^j (v)=\zeta^{\lambda j} v$
are  equal to 
\[m_\lambda=g_{F^T}-1+\sum_{i=1}^t \left\langle \frac{-\alpha_\lambda \Phi_i}{e_i'}\right\rangle, \mbox { if } \lambda \neq 0\]
and 
\begin{eqnarray*}
 m_0&=&g_{F^T}+\sum_{i=1}^t \left\langle \frac{-\alpha_0 \Phi_i}{e_i'}\right\rangle  \\
 &=&g_{F^T}, \mbox{ if } \lambda=0.
\end{eqnarray*}
{For the last equality it is enough to notice that as $(\Phi_i,e_i')=1$ then 
  $\sum_{i=1}^t \left\langle \frac{-\alpha_0 \Phi_i}{e_i'}\right\rangle=0$, since the condition 
$e_i' \mid \alpha_0$ is equivalent to $e_i' \mid 0$ for all $i$'s (see \cite[page 115]{vm}).}

From Tamagawa result we have that  $d^*(0,1)=1$,
 while for the remaining $m_0-1=g_{F^T}-1$  representations give us that in both cases 
($\lambda=0$ and $\lambda \neq 0$) we have 
 \[
  d^*(\lambda,p^\ell)=\frac{1}{p^\ell}\left(g_{F^T}-1+\sum_{i=1}^t \left\langle 
\frac{-\alpha_\lambda \Phi_i}{e_i'}\right\rangle\right)
 \]
and $d^*(\lambda,k)=0$ in all other cases. Notice that the eigenvalue $\zeta^\lambda$ appears
$p^\ell$ times in every component $V(\lambda,k)$. 
\end{proof}

{\begin{remark}
Applying Riemann-Hurwitz formula, we  obtain:
\[
  d^*(\lambda,p^\ell)=g_{F^G}-1+\frac{1}{p^\ell}\sum_{i=1}^t \left\langle 
\frac{-\alpha_\lambda \Phi_i}{e_i'}\right\rangle
 \]
\begin{itemize}
\item If $F/F^T$ is unramified, i.e. when $e_i'=1$ for all $i$, then this coincides with the result of Tamagawa, \cite{Tamagawa:51}.
\item If $p^\ell=1$, i.e. $F^G=F^T$, then this coincides with the result of Hurwitz  \cite[Theorem 3.5, p. 600]{MorrisonPinkham}, after letting  $r=1$,
to eq. (\ref{akdef}).
\end{itemize}
\end{remark}
}

Is there a place  $P$ of $F$ that is fully ramified in extension $F/F^{\langle g^n \rangle}$?
 If not then 
we consider the place $P$  with maximal ramification index. Set $r=\ell-\max\{\epsilon_i\}$.
The wild  decomposition 
group $\langle g^n \rangle (P)$ at this 
place is cyclic and we will denote  the corresponding fixed field by $E_r$.
Call $E$ the fixed field of the wild part $\langle g^n \rangle$. 
Then we will have a tower of fields $F/E_r/E$ such that in extension $E_r/E$ there 
is no ramification at all. Notice that $r=0$ and $E_r=E$ if and only if there is 
a place $P$ fully ramified in extension $F/F^{\langle g^n \rangle}$.

For the study of the spaces $V^{k+1}/V^k$, with $k=0, \ldots, p^\ell-1$, we will distinguish two cases:
\newline
{\bf Case 1.} ${k< p^\ell-p^r}$.

\begin{lemma} \label{le:6}
Assume that $k< p^\ell-p^r$.
 If the differential  $\omega=\sum_{\nu=0}^k c_\nu w_\nu dx \in V^{k+1}$, {representing a class in  $  V^{k+1}/ V^{k}$},
  is holomorphic then 
\[c_k \in L_{F^P}\left(\mathrm{div}_{F^P}(dx) +\sum_{\mu=1}^s \nu_{\mu,k} \bar{P}_\mu \right).\]
The space $V^{k+1}/V^k$ is of dimension $g_{F^P}-1+\sum_{\mu=1}^s \nu_{\mu,k}$. 
\end{lemma}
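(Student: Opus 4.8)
The plan is to analyze when a differential $\omega = \sum_{\nu=0}^k c_\nu w_\nu\, dx$ lying in $V^{k+1}$ is holomorphic, and then to identify $V^{k+1}/V^k$ with a Riemann--Roch space over $F^P$. By Lemma \ref{properties}(2), the condition $\omega \in V^{k+1}$ forces $c_{k+1} = \cdots = c_{p^\ell-1}=0$, and the image of $\omega$ in $V^{k+1}/V^k$ is determined precisely by the top coefficient $c_k$ (since $\omega \in V^k$ iff also $c_k=0$). So the map $\omega \mapsto c_k$ induces a $K$-linear injection $V^{k+1}/V^k \hookrightarrow F^P$, and the whole point is to compute its image. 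I would first observe that the $T$-equivariance noted before Lemma \ref{properties} is irrelevant to the dimension count here (that refinement is used later to split into characters), so the task is purely: \emph{which $c_k \in F^P$ arise as the leading coefficient of a holomorphic differential of this shape?}

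First I would fix a place $P_{\mu,\nu}$ of $F$ above $\bar P_\mu$ and compute the valuation of $\omega$ there. The key local fact is that the holomorphy of $\omega = \sum_{\nu \le k} c_\nu w_\nu\, dx$ at $P_{\mu,\nu}$ is governed entirely by the $c_k w_k\, dx$ term, because the basis $\{w_0,\ldots,w_{p^\ell-1}\}$ of Lemma \ref{properties} is chosen (following Valentini--Madan) so that the $w_\nu$ have \emph{distinct} valuations at each ramified place — indeed $v_{P_{\mu,\nu}}(w_\nu) = -\sum_j a_j^\nu \Phi(\mu,j) p^{\ell-j}$ by part (3), and these are pairwise distinct for $0 \le \nu \le p^\ell-1$ since the $\Phi(\mu,j)$ are prime to $p$ and the $a_j^\nu$ are the $p$-adic digits of $\nu$. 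Hence no cancellation of leading terms can occur, and $v_{P_{\mu,\nu}}(\omega) \ge 0$ is equivalent to $v_{P_{\mu,\nu}}(c_k w_k\, dx) \ge 0$ for the $k$-th term \emph{provided} the other terms cause no trouble; one checks the lower-index terms $c_\nu w_\nu dx$ with $\nu < k$ are automatically at least as regular once $c_\nu$ lies in the appropriate space, and in any case the quotient $V^{k+1}/V^k$ only sees $c_k$. Writing $dx$ in terms of a uniformizer and using $v_{P_{\mu,\nu}}(dx) = v_{\bar P_\mu}(dx) \cdot e_\mu + (\delta_\mu)$-type contributions from the different, the inequality $v_{P_{\mu,\nu}}(c_k w_k\, dx)\ge 0$ descends (after dividing the relevant valuation by $e_\mu = p^{\epsilon_\mu}$ and taking the floor, which is exactly the content of the definition $\nu_{\mu,k} = \lfloor (\delta_\mu + v_{P_{\mu,\nu}}(w_k))/e_\mu \rfloor$ in \eqref{defnu}) to the condition
\[
 v_{\bar P_\mu}(c_k) \ge -v_{\bar P_\mu}(\mathrm{div}_{F^P}(dx)) - \nu_{\mu,k}.
\]
At unramified places of $F/F^P$ the $w_\nu$ are units, so $\omega$ holomorphic just says $c_k\, dx$ is holomorphic there, i.e. the usual divisor-of-$dx$ condition with no correction term. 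Combining, $c_k$ must lie in $L_{F^P}\big(\mathrm{div}_{F^P}(dx) + \sum_{\mu=1}^s \nu_{\mu,k}\bar P_\mu\big)$, and conversely any such $c_k$ (together with a suitable choice of lower coefficients $c_\nu$, $\nu<k$, which we are free to take) gives a holomorphic $\omega$; this gives the claimed description of the image.

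For the dimension, I would then apply Riemann--Roch over $F^P$ to the divisor $D := \mathrm{div}_{F^P}(dx) + \sum_{\mu} \nu_{\mu,k}\bar P_\mu$. Since $\deg \mathrm{div}_{F^P}(dx) = 2g_{F^P}-2$, we get $\deg D = 2g_{F^P}-2 + \sum_\mu \nu_{\mu,k}$, so Riemann--Roch gives $\dim L_{F^P}(D) = g_{F^P}-1 + \sum_\mu \nu_{\mu,k} + \dim L_{F^P}(\mathrm{div}_{F^P}(dx) - D)$; the hypothesis $k < p^\ell - p^r$ should force $\sum_\mu \nu_{\mu,k} > 0$ (this is where the bound on $k$ enters — for small $k$ the added divisor is strictly positive in degree and in fact effective enough that the complementary space vanishes), hence $\deg(\mathrm{div}_{F^P}(dx)-D) < 0$, the complementary space is $0$, and $\dim(V^{k+1}/V^k) = g_{F^P}-1+\sum_\mu \nu_{\mu,k}$. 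The main obstacle is the bookkeeping in the local computation — correctly tracking the different exponent $\delta_\mu$, the ramification index $e_\mu = p^{\epsilon_\mu}$, and the valuation formula from Lemma \ref{properties}(3) through the floor in \eqref{defnu}, and verifying that the non-leading terms $c_\nu w_\nu dx$ genuinely impose no additional constraint on the quotient; and separately, justifying carefully that $\sum_\mu \nu_{\mu,k} > 0$ in the range $k < p^\ell - p^r$ so that the dual Riemann--Roch term drops out (if it can vanish for some such $k$, one must argue the complementary space is still $0$ by a sharper degree or support argument).
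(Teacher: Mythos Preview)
Your proposal is correct and in fact more detailed than the paper's own treatment: the paper does not prove this lemma at all but simply refers the reader to the proof of Theorem~1 and p.~112 of Valentini--Madan \cite{vm}, whose argument you have essentially reconstructed. The outline you give --- identifying $V^{k+1}/V^k$ with a Riemann--Roch space via $\omega\mapsto c_k$, analyzing the local condition at ramified places using the distinct valuations of the $w_\nu$ from Lemma~\ref{properties}(3), and applying Riemann--Roch with $\sum_\mu\nu_{\mu,k}>0$ for $k<p^\ell-p^r$ to kill the correction term --- is precisely the Valentini--Madan argument the paper cites.

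One small clarification on a point you flagged: the reason the holomorphy condition on $\omega$ decouples into a condition on each $c_\nu w_\nu\,dx$ separately is not merely that the $v_{P_{\mu,\nu}}(w_\nu)$ are pairwise distinct, but that they are pairwise incongruent modulo $e_\mu=p^{\epsilon_\mu}$ (since the $c_\nu\in F^P$ contribute valuations divisible by $e_\mu$); this is what the formula in Lemma~\ref{properties}(3) gives, as the $\Phi(\mu,j)$ are units mod $p$. With that understood, the valuation of the sum is the minimum of the term valuations, so holomorphy of $\omega$ forces holomorphy of each term, and in particular of $c_k w_k\,dx$, yielding the stated membership of $c_k$; the converse direction (surjectivity onto the Riemann--Roch space) and the positivity $\sum_\mu\nu_{\mu,k}>0$ for $k<p^\ell-p^r$ are exactly the computations carried out in \cite{vm}.
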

\begin{proof}
See the proof of theorem 1 and  page 112 in \cite{vm}.
\end{proof}

In order to study the $k[T]$-module structure of the space $V$ we will apply the previous argument 
with $f$ in place of the $c_k$ and we focus our  study to  the space of differentials
 which  have poles at $ \sum_{\mu=1}^s \nu_{\mu,k}P_\mu$, i.e. differentials of the form:
\begin{equation} \label{hol-rep}
  \omega= f dx \mbox{ such that }  \mathrm{div}_{F^P}(f dx) \geq -\sum_{\mu=1}^s \nu_{\mu,k}\bar{P}_\mu.
\end{equation}
We may choose the function $x \in F^P$ to be a function in $F^G$. Let $\tau=g^{p^\ell}$ be a
generator of the cyclic group $T$. Recall that we assumed that $\tau y=\zeta_n^r y$ and we 
have selected $\alpha_\lambda$ such that $r \alpha_\lambda=\lambda \mod n$.  Assume  that 
\[
 \tau (f dx )= \zeta^\lambda f dx.
\]
 By eq. (\ref{akdef}) we have
\[
 \tau(f/y^{\alpha_\lambda})=f/y^{\alpha_\lambda},
\]
so $f=h y^{\alpha_\lambda}$ with $h\in F^G$. 
Therefore, eq. (\ref{hol-rep}) is satisfied if and only if
\begin{equation} \label{con-tame-down}
 N_{F^P/F^G}( \mathrm{div}( f dx)) \geq N_{F^P/F^G}\left( -\sum_{\mu=1}^s \nu_{\mu,k} \bar{P}_\mu\right).
\end{equation}
We compute:
\[
 N_{F^P/F^G}( \mathrm{div}(f dx))=
 n\mathrm{div}_{F^G}{h}+
  \alpha_k \mathrm{div}_{F^G}(b)+
n\mathrm{div}_{F^G}(dx)+
\mathrm{D}(F^P/F^G).  
\]
\begin{remark}\label{degrees}
Whenever we write down a reduced divisor 
 $A=\sum \alpha_i  P_i$ (i.e. $P_i\neq P_j)$ with $q_i \in \mathbb{Q}$ we 
mean the divisor $\sum \lf q_i \rf P_i$.
Notice that if  $A=\sum \alpha_i P_i$ is a divisor (with possible rational  coefficients) and $B=\sum 
\beta_j P_j$ is a divisor
with integer coefficients, then since for $\alpha \in \mathbb{Q}, \beta\in 
\mathbb{Z}$ $\lf \alpha+\beta \rf=\lf \alpha \rf +\beta $ we
have that 
\[
 A+B=\sum \lf \alpha_i \rf P_i + \sum\beta_j P_j,
\]
 i.e. we don't have to write down $A+B$ in reduced form, before taking the integral part of 
its coefficients. 
\end{remark}
Using eq. (\ref{div-b}),(\ref{diff-tame}) we see that 
eq. (\ref{con-tame-down}) is equivalent to 
\[
 \mathrm{div}(h) \geq -\mathrm{div}_{F^G}(dx)
- \alpha_\lambda A
-\sum_{j=1}^t \left( \frac{\alpha_\lambda \phi_j}{n} +1 -\frac{1}{e_j'}\right) \bar{Q}_j
 -\frac{1}{n}N_{F^P/F^G}\left( \sum_{\mu=1}^s \nu_{\mu,k}\bar{P}_\mu\right),
\]
i.e. $h \in L(W+E_{k,\lambda})$. Notice that the norm $N_{F^P/F^G}(P_\mu)$ is just the place of $F^G$ 
 lying below $P_\mu$. 
We proved the following 
\begin{lemma}\label{degE}
The subspace of $V^{k+1}/V^k$ of elements where $g$ acts by multiplication by $\zeta^\lambda$ is isomorphic to 
the space  
 $L_{F^G}(W+E_{k,\lambda})$, 
where $W$ is a canonical divisor on $F^G$ and 
\[
 E_{k,\lambda}:=\alpha_\lambda A
+\sum_{j=1}^t \left( \frac{\alpha_\lambda \phi_j}{n} +1 -\frac{1}{e_j'}\right) \bar{Q}_j
 +\frac{1}{n}N_{F^P/F^G}\left( \sum_{\mu=1}^s \nu_{\mu,k}P_\mu\right)
\]
is an effective divisor.
\end{lemma}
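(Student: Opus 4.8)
The plan is to compute the divisor of the differential $f\,dx$ pushed down to $F^G$, match it against the bound coming from Lemma \ref{le:6}, and read off that the $\zeta^\lambda$-eigenspace of $V^{k+1}/V^k$ is the Riemann--Roch space $L_{F^G}(W+E_{k,\lambda})$. First I would record, following the $T$-equivariance of the natural surjection $V^{k+1}\to V^{k+1}/V^k$ and the description of $V^{k+1}\cap V(\lambda,m)$ from Lemma \ref{properties}(2), that a class in $V^{k+1}/V^k$ is represented by a differential $\omega=\sum_{\nu=0}^k c_\nu w_\nu\,dx$, and that being holomorphic forces (Lemma \ref{le:6}) the leading coefficient $c_k$ to lie in $L_{F^P}(\mathrm{div}_{F^P}(dx)+\sum_\mu \nu_{\mu,k}\bar P_\mu)$; after replacing $c_k$ by a generic function $f$ with the same polar behaviour, it suffices to understand differentials $\omega=f\,dx$ with $\mathrm{div}_{F^P}(f\,dx)\ge -\sum_\mu \nu_{\mu,k}\bar P_\mu$.

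Next I would diagonalise the $\tau=g^{p^\ell}$-action: assuming $\tau(f\,dx)=\zeta^\lambda f\,dx$ and $\tau y=\zeta_n^r y$, the congruence $r\alpha_\lambda\equiv\lambda \bmod n$ of eq. (\ref{akdef}) gives $\tau(f/y^{\alpha_\lambda})=f/y^{\alpha_\lambda}$, hence $f=h y^{\alpha_\lambda}$ with $h\in F^G$. The key computation is then the norm $N_{F^P/F^G}(\mathrm{div}(f\,dx))$, which I would expand using multiplicativity of the norm on divisors:
\[
N_{F^P/F^G}(\mathrm{div}(f\,dx))=n\,\mathrm{div}_{F^G}(h)+\alpha_\lambda\,\mathrm{div}_{F^G}(b)+n\,\mathrm{div}_{F^G}(dx)+D(F^P/F^G),
\]
using that $N_{F^P/F^G}(\mathrm{div}(y))=\mathrm{div}_{F^G}(b)$ (from $y^n=b$) and that $N_{F^P/F^G}(\mathrm{div}(dx))=n\,\mathrm{div}_{F^G}(dx)+D(F^P/F^G)$ since $x\in F^G$ and the conductor-discriminant relation for the tame Kummer extension contributes the discriminant $D(F^P/F^G)$. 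Substituting the explicit form of $\mathrm{div}_{F^G}(b)$ from eq. (\ref{div-b}) and of $D(F^P/F^G)$ from eq. (\ref{diff-tame}), and taking norms is compatible with the inequality defining holomorphicity (since $N_{F^P/F^G}$ is monotone on divisors, and $\sum_\mu \nu_{\mu,k}P_\mu$ is $T$-invariant so its norm is well-defined), condition (\ref{con-tame-down}) becomes the stated lower bound for $\mathrm{div}(h)$, i.e. $h\in L_{F^G}(W+E_{k,\lambda})$ with $W$ a canonical divisor of $F^G$ and $E_{k,\lambda}$ as displayed. Here I would invoke Remark \ref{degrees} to justify that the floor operations in $\nu_{\mu,k}$ and in the coefficients of $E_{k,\lambda}$ can be applied coefficientwise without first putting divisors in reduced form.

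Finally I would check effectivity of $E_{k,\lambda}$: the coefficient $\alpha_\lambda\phi_j/n+1-1/e_j'$ at $\bar Q_j$ is positive since $0\le\alpha_\lambda\le n-1$, $0<\phi_j<n$ and $e_j'\ge1$; the coefficient $\nu_{\mu,k}/n$ at $N_{F^P/F^G}(P_\mu)$ is nonnegative in the range $k<p^\ell-p^r$ because $\delta_\mu+v_{P_{\mu,\nu}}(w_k)\ge0$ there (this uses the bound on $k$ entering through Lemma \ref{properties}(3) and the definition (\ref{defnu}) of $\nu_{\mu,k}$); and $\alpha_\lambda A$ is effective once $A$ is chosen effective, which is possible by the freedom in selecting $b$ in eq. (\ref{def-eq-cyc}). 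The main obstacle I anticipate is bookkeeping the norm of the different/discriminant correctly and ensuring that taking $N_{F^P/F^G}$ interacts cleanly with the floor functions hidden inside $\nu_{\mu,k}$ — i.e. that passing from the inequality over $F^P$ to the inequality over $F^G$ does not lose or gain a unit in any coefficient; Remark \ref{degrees} is designed precisely to handle this, but verifying it at the wildly ramified places $P_\mu$ ($1\le\mu\le s_0$) versus the tamely ramified $\bar Q_j$ requires care.
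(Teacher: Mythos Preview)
Your derivation of the isomorphism is essentially the paper's own argument, which is laid out in the paragraphs immediately preceding the lemma: pass from $c_k$ to a generic $f$ via Lemma~\ref{le:6}, use the eigenvalue condition and eq.~(\ref{akdef}) to write $f=hy^{\alpha_\lambda}$ with $h\in F^G$, apply $N_{F^P/F^G}$ to the holomorphicity inequality, and substitute eqs.~(\ref{div-b}) and~(\ref{diff-tame}) to read off $h\in L_{F^G}(W+E_{k,\lambda})$. So for the isomorphism there is nothing to add.

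Your effectivity check, however, contains a genuine error. You assert that ``$\alpha_\lambda A$ is effective once $A$ is chosen effective, which is possible by the freedom in selecting $b$.'' This is false: since $nA+\sum_i\phi_i\bar Q_i=\mathrm{div}_{F^G}(b)$ is principal, one has $\deg A=-\sum_i\phi_i/n<0$ (this is exactly eq.~(\ref{degA})), and a divisor of negative degree can never be effective. Replacing $b$ by $bc^n$ only moves $A$ within its linear equivalence class and does not change its degree. Thus the term $\alpha_\lambda A$ will in general carry negative coefficients at places in the support of $A$, and effectivity of $E_{k,\lambda}$ cannot be argued componentwise as you do. The paper itself does not spell out this point; what is actually exploited downstream (Proposition~\ref{comp-c} and Lemma~\ref{degE_k}) is the nonnegativity of $\deg E_{k,\lambda}$, together with the observation that when the degree vanishes the divisor is trivial, and that is what you should aim to justify rather than effectivity of the individual summands.
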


%
%

We will now write $E_{k,\lambda}$ as a sum of an integral divisor and of a divisor in reduced form.
We can assume that $\{\bar{Q}_1,\ldots, \bar{Q}_{t_0} \}$ is the set of ramified places
such that  their extensions in $F^P$ do not ramified further in $F/F^P$.
We will denote by $\{ \bar{Q}_{t_0+1},\ldots,\bar{Q}_t\}$ the rest of the ramified places.
For the $s-(t-t_0)$ places of $F^P$ that are not ramified in 
$F^P/F^G$ we  will denote by $\Pi_\mu$ the places $\sum_{j=1}^n \tau P_\mu$. 

Now $E_{k,\lambda}$ can be  written:
\begin{eqnarray*}
 E_{k,\lambda}&:=&\alpha_\lambda A +\sum_{j=1}^{t_0}
 \left( \frac{\alpha_\lambda \phi_j}{n} +\frac{e_j'-1}{e_j'}\right) \bar{Q}_j
+ \sum_{j=t_0+1}^{t} \left( \frac{\alpha_\lambda \phi_j+\nu_{j,k}}{n} +\frac{e_j'-1}{e_j'}\right) \bar{Q}_j\\
& & +
\sum_{\mu=1}^{s-t+t_0}
\frac{\nu_{\mu,k}}{n} \Pi_\mu.
\end{eqnarray*}
The divisor  $E_{k,\lambda}$ as it is written above is not necessarily in reduced form. 
We don't know whether the divisor 
$A$ is prime to $\bar{Q}_i$ or $\Pi_\mu$. But since it has integer coefficients
{ and since all the divisors with possibly rational coefficients are prime to each other}, we arrive 
at 
\begin{eqnarray} \nonumber
 E_{k,\lambda}& :=& \alpha_\lambda A +\sum_{j=1}^{t_0} \lf \frac{\alpha_\lambda \phi_j}{n} +\frac{e_j'-1}{e_j'}\rf \bar{Q}_j
+ \sum_{j=t_0+1}^{t} \lf \frac{\alpha_\lambda \phi_j+\nu_{j,k}}{n} +\frac{e_j'-1}{e_j'}\rf \bar{Q}_j
\\ & & +
\sum_{\mu=1}^{s-t+t_0}
\lf
\frac{\nu_{\mu,k}}{n}
\rf \Pi_\mu. \label{finE}
\end{eqnarray}

\begin{lemma} \label{degE_k}
 The degree of $E_{k,\lambda}$ equals to
{
\[
 \mathrm{deg}(E_{k,\lambda}):=\sum_{i=1}^{t} \left\langle -\frac{\alpha_\lambda \Phi_i}{e_i'}\right\rangle+
\sum_{j=t_0+1}^{t}
\lf
\left\langle
\frac{\alpha_\lambda \Phi_j -1}{e_j'}
\right\rangle
+
\frac{\nu_{j,k}}{n}
\rf
+
\sum_{\mu=1}^{s-t+t_0}
\lf
\frac{\nu_{\mu,k}}{n}
\rf.
\]
}
\end{lemma}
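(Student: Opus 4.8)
The plan is to compute $\deg(E_{k,\lambda})$ term by term using the explicit reduced form of $E_{k,\lambda}$ from equation (\ref{finE}). Since $\deg$ is additive over the four groups of summands, it suffices to handle each group separately. The divisor $A$ contributes $\alpha_\lambda \deg(A)$; the places $\Pi_\mu$ contribute $\lf \nu_{\mu,k}/n\rf$ each, and since the $\bar P_\mu$ range over the $s-(t-t_0)$ places below unramified extensions the last sum already matches the last sum in the claimed formula. So the real content is to show that
\[
\alpha_\lambda \deg(A) + \sum_{j=1}^{t_0} \lf \frac{\alpha_\lambda\phi_j}{n}+\frac{e_j'-1}{e_j'}\rf + \sum_{j=t_0+1}^{t}\lf \frac{\alpha_\lambda\phi_j+\nu_{j,k}}{n}+\frac{e_j'-1}{e_j'}\rf
\]
equals $\sum_{i=1}^t\left\langle -\frac{\alpha_\lambda\Phi_i}{e_i'}\right\rangle + \sum_{j=t_0+1}^t \lf \left\langle \frac{\alpha_\lambda\Phi_j-1}{e_j'}\right\rangle + \frac{\nu_{j,k}}{n}\rf$.

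The first key step is to pin down $\deg(A)$. From $\mathrm{div}_{F^G}(b)=nA+\sum_{i=1}^t\phi_i\bar Q_i$ (equation (\ref{div-b})) and the fact that $b$ is a rational function, the divisor has degree $0$, so $\deg(A) = -\frac{1}{n}\sum_{i=1}^t \phi_i$. Thus $\alpha_\lambda\deg(A) = -\frac{\alpha_\lambda}{n}\sum_{i=1}^t\phi_i$, which I will then distribute into the two sums over $j$. The second step is the arithmetic identity for the floor terms: writing $\phi_j = (n,\phi_j)\Phi_j$ and $e_j' = n/(n,\phi_j)$, one has $\frac{\alpha_\lambda\phi_j}{n} = \frac{\alpha_\lambda\Phi_j}{e_j'}$. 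Then for any rational number $q$ one uses the elementary relation $\lf q + \frac{e-1}{e}\rf = \lf q - \frac{1}{e}\rf + 1$ when... more carefully, $\frac{e_j'-1}{e_j'} = 1 - \frac{1}{e_j'}$, so $\lf \frac{\alpha_\lambda\Phi_j}{e_j'} + 1 - \frac{1}{e_j'}\rf = 1 + \lf \frac{\alpha_\lambda\Phi_j - 1}{e_j'}\rf$, and then $-\frac{\alpha_\lambda\phi_j}{n} + 1 + \lf\frac{\alpha_\lambda\Phi_j-1}{e_j'}\rf = -\frac{\alpha_\lambda\Phi_j}{e_j'} + \lf\frac{\alpha_\lambda\Phi_j-1}{e_j'}\rf + 1$. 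Recognizing $\lf x \rf - x = -\langle x\rangle$ and being careful with the shift by $1$, one massages this into $\left\langle -\frac{\alpha_\lambda\Phi_j}{e_j'}\right\rangle$ (using that $\langle x\rangle + \langle -x\rangle \in\{0,1\}$, equal to $1$ exactly when $x\notin\mathbb{Z}$, and that $\gcd(\Phi_j,e_j')=1$ controls the integrality). For the tame places $j\le t_0$ where $\nu_{j,k}$ does not appear, the same computation with $\nu_{j,k}=0$ gives precisely $\left\langle -\frac{\alpha_\lambda\Phi_j}{e_j'}\right\rangle$. For the wild-below places $t_0 < j \le t$, I keep the extra $\frac{\nu_{j,k}}{n}$ inside the floor, obtaining $\left\langle -\frac{\alpha_\lambda\Phi_j}{e_j'}\right\rangle + \lf \left\langle \frac{\alpha_\lambda\Phi_j-1}{e_j'}\right\rangle + \frac{\nu_{j,k}}{n}\rf$ after separating the integer part; summing over $j$ reassembles exactly the claimed expression.

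The step I expect to be the main obstacle — or at least the most error-prone — is the bookkeeping of the fractional-part manipulation, specifically making sure the shift $\frac{\alpha_\lambda\Phi_j}{e_j'}\mapsto\frac{\alpha_\lambda\Phi_j-1}{e_j'}$ is handled consistently with the sign on $\alpha_\lambda$ and that no off-by-one error creeps in between $\lf -x\rf = -\lceil x\rceil$ and $-\langle x\rangle$. The clean way to avoid trouble is to prove once, as a small sublemma, the identity
\[
-\frac{a}{e} + \lf \frac{a-1}{e} + q \rf = \left\langle -\frac{a}{e} \right\rangle + \lf \langle \tfrac{a-1}{e}\rangle + q\rf \quad\text{for } a\in\mathbb Z,\ q\in\mathbb Q_{\ge 0},\ \gcd(a,e)=1,
\]
(interpreting $q=0$ in the tame case and noting the bracketed floor on the right then vanishes), and then apply it mechanically to each $j$. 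Once this sublemma is in place the rest is formal substitution of $\phi_j=(n,\phi_j)\Phi_j$, $e_j'=n/(n,\phi_j)$, the degree-zero condition on $\mathrm{div}_{F^G}(b)$, and additivity of degree.
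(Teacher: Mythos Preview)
Your approach is exactly the paper's: compute $\deg(A)$ from $\deg\mathrm{div}_{F^G}(b)=0$, distribute $-\alpha_\lambda\Phi_j/e_j'$ into each $j$-summand, and reduce via $\lf\frac{a+e-1}{e}+q\rf=1+\lf\frac{a-1}{e}+q\rf$ together with $\lceil a/e\rceil=-\lf -a/e\rf$. Your prose computation is correct, but the displayed sublemma is off by one (the left side should be $-\frac{a}{e}+1+\lf\frac{a-1}{e}+q\rf$, matching the ``$+1$'' you correctly carried two lines earlier), and the hypothesis $\gcd(a,e)=1$ is both unnecessary and not generally true here, since $a=\alpha_\lambda\Phi_j$ and $e_j'$ may divide $\alpha_\lambda$; the identity $1+\lf\frac{a-1}{e}\rf=\lceil\frac{a}{e}\rceil$ holds for all integers $a$.
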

\begin{proof}
Following Valentini Madan we see that 
that 
\begin{equation}\label{degA}
\deg(A)=-\sum_{j=1}^t \frac{\phi_j}{n}=-\sum_{j=1}^t \frac{\Phi_j}{e_j'}
\end{equation} 
(recall that $\Phi_i=\phi_i/(\phi_i,n)$). 
%
\begin{eqnarray*}
 \deg E_{k,\lambda} &= &\sum_{j=1}^{t_0} 
\left\langle -\frac{\alpha_\lambda \Phi_j}{e_j'} \right\rangle
+ \sum_{j=t_0+1}^{t}
\left( 
\lf \frac{\alpha_\lambda \phi_j+\nu_{j,k}}{n} +\frac{e_j'-1}{e_j'}
\rf-\frac{\alpha_\lambda \Phi_j}{e_j'}
\right)
 \\  & &+ 
\sum_{\mu=1}^{s-t+t_0}
\lf
\frac{\nu_{\mu,k}}{n}
\rf.
\end{eqnarray*}
We will use
\[
 \frac{\alpha_\lambda \Phi_j}{e_j'}=\lf \frac{\alpha_\lambda \Phi_j}{e_j'} \rf+ \left\langle \frac{\alpha_\lambda \Phi_j}{e_j'} \right\rangle, 
\]
We have
{
\[
\lf \frac{\alpha_\lambda \phi_j+\nu_{j,k}}{n} +\frac{e_j'-1}{e_j'}
\rf-\frac{\alpha_\lambda\Phi_j}{e_j'}=
 \]
 \[
= \left\langle -\frac{\alpha_\lambda \Phi_j}{e_j'}\right\rangle +
\lf
\frac{\alpha_\lambda \Phi_j +e_j'-1}{e_j'}+\frac{\nu_{j,k}}{n}
\rf 
+\lf -\frac{\alpha_\lambda \Phi_j}{e_j'} \rf= \]
\[=\left\langle -\frac{\alpha_\lambda \Phi_j}{e_j'}\right\rangle +
\lf 
\lf 
\frac{\alpha_\lambda \Phi_j +e_j'-1}{e_j'} 
\rf +
\left\langle \frac{\alpha_\lambda \Phi_j +e_j'-1}{e_j'}\right\rangle +\frac{\nu_{j,k}}{n}
\rf +
\lf -\frac{\alpha_\lambda \Phi_j}{e_j'} \rf= \]
\[= \left\langle -\frac{\alpha_\lambda \Phi_j}{e_j'}\right\rangle +
\lc
\frac{\alpha_\lambda \Phi_j}{e_j'}
\rc
+
\lf
\left\langle \frac{\alpha_\lambda \Phi_j +e_j'-1}{e_j'}\right\rangle +\frac{\nu_{j,k}}{n}
\rf
+
\lf -\frac{\alpha_\lambda \Phi_j}{e_j'} \rf =\]
\[=\left\langle -\frac{\alpha_\lambda \Phi_j}{e_j'}\right\rangle 
+
\lf
\left\langle \frac{\alpha_\lambda \Phi_j -1}{e_j'}\right\rangle +\frac{\nu_{j,k}}{n}
\rf.
\]
}
\end{proof}

\begin{proposition} \label{comp-c}
{ If $k< p^\ell -p^r$,} we have
\[
 c(\lambda,k)=\dim L(W+E_{k,\lambda})=g_{F^G}-1+ \deg(E_{k,\lambda}) +\Lambda_{k,\lambda}
\]
Moreover, if 
\[
 \deg(E_{k,\lambda})=0 
\]
then  $\Lambda_{k,\lambda}=1$. In all other cases $\Lambda_{k,\lambda}=0$.
\end{proposition}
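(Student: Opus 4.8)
The first equality requires no work: by definition $c(\lambda,k)$ is the dimension of the subspace of $V^{k+1}/V^k$ on which $g$ acts as multiplication by $\zeta^\lambda$, and under the standing hypothesis $k<p^\ell-p^r$ Lemma~\ref{degE} identifies this subspace with the Riemann--Roch space $L_{F^G}(W+E_{k,\lambda})$. So everything reduces to computing $\dim L_{F^G}(W+E_{k,\lambda})$ and defining $\Lambda_{k,\lambda}$ appropriately.

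The plan is simply to feed the divisor $D:=W+E_{k,\lambda}$ into the Riemann--Roch theorem on the curve with function field $F^G$:
\[
 \dim L_{F^G}(D)-\dim L_{F^G}(W-D)=\deg D+1-g_{F^G}.
\]
Since $W-D=-E_{k,\lambda}$ and $\deg D=(2g_{F^G}-2)+\deg E_{k,\lambda}$, this rearranges to
\[
 \dim L_{F^G}(W+E_{k,\lambda})=g_{F^G}-1+\deg E_{k,\lambda}+\dim L_{F^G}(-E_{k,\lambda}),
\]
which is exactly the asserted formula once we set $\Lambda_{k,\lambda}:=\dim L_{F^G}(-E_{k,\lambda})$. (Incidentally $\deg E_{k,\lambda}$ was already evaluated in Lemma~\ref{degE_k}, and it equals the Boseck invariant $\Gamma_{k,\lambda}$ of Definition~\ref{defBoseck}; substituting that turns this statement into the first assertion of Proposition~\ref{pro3}.)

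It then remains to check that $\Lambda_{k,\lambda}$ obeys the stated rule, which is where the effectivity of $E_{k,\lambda}$ (asserted in Lemma~\ref{degE}) is used. If $E_{k,\lambda}=0$, then $L_{F^G}(-E_{k,\lambda})=L_{F^G}(0)$ is the constant field, so $\Lambda_{k,\lambda}=1$. If $E_{k,\lambda}\neq 0$, then any nonzero $f\in L_{F^G}(-E_{k,\lambda})$ satisfies $\mathrm{div}(f)\geq E_{k,\lambda}\geq 0$, hence $f$ is a nonzero constant with $\mathrm{div}(f)=0$, forcing $E_{k,\lambda}=0$, a contradiction; thus $\Lambda_{k,\lambda}=0$. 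Since $E_{k,\lambda}$ is effective, the dichotomy $E_{k,\lambda}=0$ versus $E_{k,\lambda}\neq 0$ is the same as $\deg E_{k,\lambda}=0$ versus $\deg E_{k,\lambda}>0$, which is precisely the rule in the statement.

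There is no genuine obstacle in this proposition: all the substantial work --- realizing the $\zeta^\lambda$-eigenspace as a Riemann--Roch space over $F^G$, writing $E_{k,\lambda}$ in reduced form, and computing its degree --- has already been carried out in Lemmas~\ref{degE} and~\ref{degE_k}. The only point demanding a moment's care is the degenerate case $\deg E_{k,\lambda}=0$, handled above via the elementary observation that an effective divisor of degree zero is the zero divisor.
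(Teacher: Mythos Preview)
Your argument is correct and matches the paper's own proof essentially line for line: apply Riemann--Roch to $W+E_{k,\lambda}$, identify the error term as $\dim L_{F^G}(-E_{k,\lambda})$, and then use the effectivity of $E_{k,\lambda}$ (from Lemma~\ref{degE}) to see that this term is $1$ when $\deg E_{k,\lambda}=0$ and $0$ otherwise. Your write-up is in fact slightly more explicit than the paper's in handling the $E_{k,\lambda}\neq 0$ case, but there is no substantive difference.
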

\begin{proof}
By Riemann-Roch theorem and lemma \ref{degE_k} we see that 
{
\[
 \dim L(W +E_{k,\lambda})=g_{F^G}-1+ \deg E_{k,\lambda}+\dim L(-E_{k,\lambda}).
\]
}
If the  divisor $\deg(E_{k,\lambda})>0$  then $\dim L(-E_{k,\lambda})=0$ and the result follows.

Assume now that $\deg(E_{k,\lambda})=0$. Since $E_{k,\lambda}$ is effective this means that $E_{k,\lambda}=0$
and in this case $\Lambda_{k,\lambda}=\ell(0)=1$. 
\end{proof}

{\bf Case 2.} $p^\ell-p^r \leq  k\leq p^\ell-1 $.
In this case we will apply the same procedure as we did in Case 1 and then we will apply 
lemma \ref{TamagawaREP} for the extension $E_r/F^G$. 
Write $k=k_1+t p^r$ with $0 \leq k_1 <  p^r$
 and $t=p^{\ell-r}-1$. Set $k_0=tp^r=p^\ell-p^r$.
Let $\sigma=g^{n}$ be a generator for the $p$-cyclic part of $G$.
Let $V_{E_r}^{k}$ be the space of holomorphic differentials of $E_r$ that are 
annihilated by $(\sigma-1)^k$. 
Valentini-Madan \cite[p.111-112]{vm} proved that
 \begin{equation}\label{isomorphism}
(\sigma-1)^{k_0}:V^{k+1}/V^k \rightarrow V_{E_r}^{k_1+1}/V_{E_r}^{k_1}
 \end{equation}
is an isomorphism. We will now consider the extension $E_r/F^G$ and we will apply lemma 
\ref{TamagawaREP} in order to compute the decomposition into indecomposable 
$G/\langle g^{p^r}\rangle$-modules. 
Let $c^*(\lambda,k_1)$ be the number of characters $\zeta \mapsto \zeta^\lambda$ in 
the module $V_{E_r}^{k_1+1}/V_{E_r}^{k_1}$.
We compute{ that $c(\lambda,k)$ equals to}:
%
\begin{eqnarray*}
 c(\lambda,k_1+p^\ell-p^r) &= & c^*(\lambda,k_1) \\
 & = &\sum_{\mu \geq k_1+1} d^*(\lambda,\mu)=
\left\{
\begin{array}{ll}
 d^*(\lambda,p^\ell), & \mbox{if }  k_1\geq 1   \mbox{ or }\lambda \neq 0 \\
 d^*(0,p^\ell)+1 { =g_{F^G}}, & \mbox{if } k_1=0,\mbox{ and } \lambda=0
\end{array}
\right.
\end{eqnarray*}
Therefore, for $k=p^\ell$ we compute:
{
\[
 d(\lambda,p^\ell)=c(\lambda,p^\ell-1)=d^*(\lambda,p^\ell)=
\frac{1}{p^r}\left(g_{E_r^T}-1+\sum_{i=1}^t \left\langle \frac{-\alpha_\lambda \Phi_i}{e_i'}\right\rangle\right)
\]
}
by lemma \ref{TamagawaREP}.
Moreover { for $p^\ell-p^r \leq  k\leq p^\ell-1 $} and from eq. (\ref{dk}) and the isomorphism given in eq. (\ref{isomorphism}) we obtain:
\begin{eqnarray*}
 d(\lambda,k) & = & c(\lambda,k-1)-c(\lambda,k), \\
        & = & c^*(\lambda,k-1-\bigl(p^\ell-p^r\bigr))-c^*(\lambda,k-\bigl(p^\ell-p^r\bigr))\\
        &=  & d^*(\lambda,k-\bigl(p^\ell-p^r\bigr))=0
\end{eqnarray*}
unless $k=p^\ell-p^r+1$ and $\lambda=0$. In this case $d^*(0,1)=1=d(0,p^\ell-p^r+1)$.

%
%
%
%

{
\begin{remark}
Notice that   when $k\geq p^\ell-p^r$, then $\nu_{\mu,k}=0$ (see also \cite[p. 110]{vm}).
Thus Boseck invariants (Definition \ref{defBoseck}) take now the form 
\[
\Gamma_{k, \lambda} =\Gamma_{\lambda}=\sum_{i=1}^t\left\langle\frac{-\alpha_\lambda \Phi_i}{e_i'}\right\rangle.
\]
With this in mind, we take that
\[
 d(\lambda,p^\ell)=
\frac{1}{p^r}\left(g_{E_r^T}-1+\Gamma_{ k, \lambda}\right).
\]
\end{remark}
This completes the proof of theorem \ref{main-th}.
}
{
\begin{example}
Suppose that $F^G=F^P$, i.e. $n=1$ then,
from eq. (\ref{th1}) and (\ref{th2}) respectively, we get that the regular representation of $G$ occurs
\[
 d(\lambda,p^\ell)= d(p^\ell)=
 \left\{
\begin{array}{ll}
g_{F^G}-1,  \mbox{ if }r \neq 0\\
 g_{F^G},\mbox{ otherwise } 
\end{array}
\right.
\]
times in the representation of $G$ in $V$. This result coincide with the results obtained in \cite{vm}.
\end{example}
}
%

\section{Relation to the theory of Weierstrass semigroups}

Aim of this section  is to find a relation between the Galois module structure 
of the space of holomorphic differentials and the Weierstrass semigroup attached to a 
ramified point. 
In characteristic zero there are results \cite{MorrisonPinkham} 
relating the structure of the Weierstrass 
semigroup at $P$  to the subgroup $G(P)$. 
For example there is  a  theorem due to J. Lewittes \cite[Th. 5]{Lewittes63}
which  relates the structure of the semigroup to the  module structure of 
holomorphic differentials. Also I. Morrison and H. Pinkham 
\cite{MorrisonPinkham}
considered the case of Galois Weierstrass points, i.e. covers of the form $X\rightarrow \mathbb{P}^1$ 
with cyclic cover group in characteristic $0$. 

Let us start with  a  convenient description of a semigroup:
Let $\Sigma\subset \mathbb{N}$ be a semigroup and let $d$ be the least positive number in 
$\Sigma$. For 
$1\leq i \leq d-1$ we denote by $b_i$ the smallest element in $\Sigma$ congruent to 
$i \mod d$, and define $\nu_i$ by the equation:
\begin{equation} \label{semnu}
 b_i =\nu_i d +i,
\end{equation}
i.e. $\nu_i=\lf\frac{d}{b_i} \rf$.
The numbers $\nu_i$ equals to  the number of gaps $a_k$ for which $a_k\equiv i \mod d$, 
and the semigroup $\Sigma$ is characterized by them.

From now on $\Sigma$ will be the Weierstrass semigroup attached to a point $P$.
Let $f$ be a function on $X$ such that $(f)_\infty=d P$. This gives rise to a map 
$f:X \mapsto \mathbb{P}^1$ and we assume that this map is a Galois cover with Galois group $G(P)$.
In characteristic $0$ the group $G(P)$ is always cyclic and the space of holomorphic differentials is 
described by a theorem due to Lewittes and Hurwitz \cite[th. 1.3, th 3.5]{MorrisonPinkham}. 

In this paper we  assume that  $G(P)$ is a cyclic group of order  $n p^\ell$.
The following theorem is a generalization of the theorems of Lewittes and Hurwitz 
 written in the language of Brauer characters \cite{SerreLinear}.
\begin{proposition} \label{r1}
Let $T$ be the tame cyclic part of $G(P)$.
Let $L$ be a complete local ring that contains the $n$-th roots of unity,
let $\mathcal{O}_L$ be its valuation ring and let $m_L$ be the maximal 
ideal of $\mathcal{O}_L$ such that $\mathcal{O}_L/m_L=K$.
For example we can take $L=W(K)[\zeta_n]$.
The modular character of $\mu: G_{\mathrm{reg}}\rightarrow \mathcal{O}_L$ induced by the 
$K[G]$-module of holomorphic differentials can be written as
\[
 \mu=\sum_{i=1}^{d-1} \mu_i \chi^i,
\]
where $\chi$ is a generator of the character group $\hat{G}_{\mathrm{reg}}$
 of the cyclic group $G_{\mathrm{reg}}=\Z/n\Z$ and 
$\mu_i$ are equal to the number of gaps at $P$ that are equivalent to $i \mod n$.
\end{proposition}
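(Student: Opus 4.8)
The plan is to build a $G(P)$-stable filtration of $H^0(X,\Omega_X)$ whose successive quotients are one-dimensional and are indexed by the \emph{gaps} at $P$, and then to read off the modular character from these quotients. This succeeds even though the $K[G(P)]$-module $H^0(X,\Omega_X)$ need not be semisimple in characteristic $p$, precisely because the modular character is additive on short exact sequences. First I would record the local picture at $P$: since $(f)_\infty=dP$ and $X\to\mathbb{P}^1$ is Galois with group $G(P)$, the place $P$ is totally ramified, so $G(P)$ is the full inertia group, $d=|G(P)|=np^\ell$, the wild inertia at $P$ is the $p$-Sylow subgroup of $G(P)$, and the tame quotient $G(P)/(\text{wild})\cong T=G_{\mathrm{reg}}\cong\Z/n\Z$ acts faithfully on the cotangent line $\mathfrak m_P/\mathfrak m_P^2$. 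Hence, for a generator $g$ of $G(P)$ and a uniformizer $t$ at $P$, one has $g(t)=\zeta t+O(t^2)$ with $\zeta$ a \emph{primitive} $n$-th root of unity, and I fix the generator $\chi$ of $\hat{G}_{\mathrm{reg}}$ to be the one determined by $\zeta$ (say $g^{p^\ell}\mapsto\zeta^{p^\ell}$, still primitive because $(n,p)=1$).

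For each integer $m\ge 0$ set $M_m:=\{\omega\in H^0(X,\Omega_X):v_P(\omega)\ge m\}$. Since $G(P)$ fixes $P$ it preserves every $M_m$, so $M_0\supseteq M_1\supseteq\cdots$ (which is $0$ for $m$ large) is a decreasing $G(P)$-stable filtration of $H^0(X,\Omega_X)$. By the standard relation between the Weierstrass gap sequence at $P$ and the vanishing orders of holomorphic differentials (an immediate consequence of Riemann--Roch), $\dim M_{m-1}-\dim M_m$ equals $1$ if $m$ is a gap at $P$ and $0$ otherwise; hence the nonzero graded pieces $Q_a:=M_{a-1}/M_a$ are exactly those with $a$ a gap, each of dimension one. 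To compute the $G(P)$-action on such a $Q_a$, take $\omega\in M_{a-1}\setminus M_a$, so $\omega=c\,t^{a-1}\,dt+(\text{terms of order}\ge a)$ with $c\ne 0$; from $g(t)=\zeta t+O(t^2)$ one gets $g(t^{a-1}\,dt)=\zeta^a\,t^{a-1}\,dt+O(t^a\,dt)$, whence $g\omega\equiv\zeta^a\omega\pmod{M_a}$. Thus $G(P)$ acts on $Q_a$ through the character $g\mapsto\zeta^a$, whose restriction to $G_{\mathrm{reg}}$ is $\chi^a$ (and this depends only on $a\bmod n$).

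It remains to assemble $\mu$. For $s\in G_{\mathrm{reg}}$ the element $s$ is $p$-regular, so it acts semisimply on any $K[G(P)]$-module, with eigenvalues that are roots of unity of order prime to $p$, and $\mu(s)$ is the sum of the canonical lifts to $\mathcal{O}_L$ of those eigenvalues; hence $\mu$ is additive on short exact sequences of $K[G]$-modules. Applying this to the filtration above,
\[
 \mu=\sum_{a\ \text{gap at }P}\mu(Q_a)=\sum_{a\ \text{gap at }P}\chi^a .
\]
Collecting terms, the multiplicity of $\chi^i$ in $\mu$ is the number of gaps at $P$ congruent to $i$ modulo $n$; equivalently, since $d=np^\ell$ and $d\in\Sigma$ forces every positive multiple of $d$ to be a non-gap, it is the number $\nu_i$ of gaps congruent to $i$ modulo $d$ from (\ref{semnu}). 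This gives exactly $\mu=\sum_{i=1}^{d-1}\mu_i\chi^i$, and as a consistency check $\sum_i\mu_i=g=\dim_K H^0(X,\Omega_X)$.

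The one delicate point — the ``main obstacle'', such as it is — lies in the construction of the filtration and the identification of the $G(P)$-action on its graded pieces: one must be confident that $H^0(X,\Omega_X)$, which is usually \emph{not} semisimple over $K[G(P)]$ when $p\mid|G(P)|$, nevertheless breaks up along a $G(P)$-stable filtration with one-dimensional graded pieces on which the action is by the honest character $g\mapsto\zeta^a$. The mechanism is that the higher-order corrections in $g(t)=\zeta t+O(t^2)$ only push a differential into a \emph{deeper} stage of the filtration, so they act trivially on the associated graded; this same fact is what makes the filtration $G(P)$-stable and what pins the character on $Q_a$ down to $\zeta^a$. Everything else is bookkeeping with Riemann--Roch and the elementary formalism of modular characters; one could also cross-check the outcome against Lemma~\ref{TamagawaREP} applied to the tame subextension, since on the $p$-regular classes $\mu$ coincides with the ordinary character of $\mathrm{Res}^{G(P)}_{T}H^0(X,\Omega_X)$.
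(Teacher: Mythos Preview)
Your argument is correct and follows essentially the same route as the paper's own proof: both use the $G(P)$-stable filtration of $H^0(X,\Omega_X)$ by order of vanishing at $P$, identify the nonzero one-dimensional graded pieces with the gaps via Riemann--Roch, and read off that the tame part acts on the piece indexed by the gap $a$ through $\zeta^a$. The only cosmetic difference is that the paper invokes Hensel's lemma to linearize the tame generator exactly, $\tau(t)=\zeta t$ in the completion, and then passes to an eigenbasis by a triangular change of coordinates, whereas you keep $g(t)=\zeta t+O(t^2)$ and appeal to additivity of the Brauer character on the associated graded; the two mechanisms are interchangeable here.

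One small wording issue: your ``equivalently'' near the end is not literally an equality of numbers---for a fixed $i$, the count of gaps $\equiv i\pmod n$ is in general larger than $\nu_i$ (the count of gaps $\equiv i\pmod d$), since several residue classes mod $d$ collapse to the same class mod $n$. What \emph{is} true, and what you actually use, is that either convention for the coefficients yields the same character after summing, because $\chi$ has order $n$; your final identity $\mu=\sum_{i=1}^{d-1}\mu_i\chi^i$ is correct with $\mu_i=\nu_i$.
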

\begin{proof}
The proof we will write is a modification of the characteristic zero 
proof given in \cite[th. 1.3]{MorrisonPinkham}.

By construction for every $\sigma \in T$ we have $\sigma(T)=T$. Let $\tau$ be 
the generator of $T$. By the lemma of Hensel we might assume that 
there is a local uniformizer $t$ at $P$ such that 
$\sigma(t)=\zeta t$ where $\zeta$ is a primitive $n$-th root of unity. 

Every gap $a_k$ corresponds by Riemann-Roch theorem to a holomorphic 
differential $\omega_k$ with a root of order $a_k-1$ at $P$. 
Observe that the flag of vector spaces $\langle \omega_g,\ldots,\omega_k \rangle$
are invariant under the action of $G$ and by a trigonal change of 
coordinates we might assume that $\omega_k$ can be selected in such a way 
so that $\omega_k =t^{a_k-1} dt$. For this selection of  $\omega_k$ we have that 
$\tau \omega_k=\zeta^{a_k} \omega_k$ and the result follows. 

\end{proof}

%

This proposition does not describe completely the relation of the semigroup 
and the $K[G]$-module structure since it gives information of the number 
gaps modulo $n$ and not modulo $np^\ell$ as required. {Notice that by construction 
$d:=np^\ell$ is the smallest non zero pole number.
} 
\begin{definition}
For every $i$, $0\leq i < n p^\ell$ we consider the reductions of
of $i$ modulo $p^\ell$ and $n$ respectively, namely:
$i_0=i \mod n$  and 
$i_1=i  \mod p^\ell$. We will denote by 
\[
 \bar{c}(i_0,i_1)=\mbox{ the number of gaps  at $P$ }  \mbox{ that are equivalent to } i \mod n p^\ell.
\]
\end{definition}
Of course these quantities are related to the $\mu_i$ defined in proposition \ref{r1}.
For an $i_0$ with $0\leq i_0 < n$ we have 
\[
\mu_{i_0} = 
\sum_{i_1=0}^{p^\ell-1} \bar{c}(i_0,i_1). 
\]
We will give an independent and complete 
description in terms of the decomposition given in eq. (\ref{decompo-1}).

\begin{proposition} \label{numb-gaps}
Let $r_{\mu,k}$ be the remainder  of the division of $\delta_\mu+v_{P_{\mu,\nu}}(w_k)$ by $p^{\ell}$,
 i.e.
\begin{equation} \label{rem}
 r_{\mu,k}=\delta_\mu+v_{P_{\mu,\nu}}(w_k)-p^{\ell}\nu_{\mu,k}.
\end{equation}
The holomorphic differentials
in $V^{k+1}$
have roots at $P_{\mu,\nu}$ of orders
\begin{equation} \label{1idiot}
r_{\mu,k}+p^{\ell}\xi, \mbox{ with }  \xi \in \{0,1,\ldots, \sum_{\mu=1}^s \nu_{\mu,k}-2\} 
\cup B_{\mu,k},
\end{equation}
where $B_{\mu,k}$ is a subset of natural numbers with $g_{F^P}$ elements, 
all  greater  than $\sum_{\mu=1}^s \nu_{\mu,k} $.
The dimension of the space of  holomorphic differentials in $V^{k+1}$ that have roots of order $x$  such that:
\begin{eqnarray} \label{cond11}
 x & \equiv & r_{\mu,k} \mod p^{\ell},\\
 x & \equiv & \alpha_\lambda \mod n \nonumber 
\end{eqnarray}
is equal to $c(\lambda,k)=\bar{c}(a_\lambda+1,r_{\mu,k}+1)$. 
\end{proposition}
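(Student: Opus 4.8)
The strategy is to combine the filtration $\{V^{k}\}$ introduced for the extension $F/F^P$ with the valuation data from Lemma \ref{properties}, exactly as in the proof of Proposition \ref{comp-c}, but now reading off the \emph{orders of vanishing} of the holomorphic differentials at the places $P_{\mu,\nu}$ rather than just their dimensions. First I would fix $k$ and recall from Lemma \ref{properties}(2) that a class in $V^{k+1}/V^{k}$ is represented by $\omega = c_k w_k\, dx$ with $c_k \in F^P$, and from Lemma \ref{le:6} that holomorphicity is equivalent to $c_k \in L_{F^P}(\mathrm{div}_{F^P}(dx) + \sum_\mu \nu_{\mu,k}\bar P_\mu)$. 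The key local computation is that for such an $\omega$ the order of vanishing at $P_{\mu,\nu}$ is $v_{P_{\mu,\nu}}(\omega) = \delta_\mu + v_{P_{\mu,\nu}}(w_k) + p^\ell\cdot v_{\bar P_\mu}(c_k \,dx/dx_{\mathrm{loc}})$ up to bookkeeping with the different exponent $\delta_\mu$; writing $\delta_\mu + v_{P_{\mu,\nu}}(w_k) = r_{\mu,k} + p^\ell \nu_{\mu,k}$ as in eq. (\ref{rem}) shows the order is congruent to $r_{\mu,k} \bmod p^\ell$ and is of the form $r_{\mu,k} + p^\ell\xi$. The range of $\xi$ then comes from the Riemann--Roch space $L_{F^P}(W_{F^P} + \sum_\mu\nu_{\mu,k}\bar P_\mu)$: its "small" elements give $\xi \in \{0,1,\dots,\sum_\mu\nu_{\mu,k}-2\}$ (these are the non-gap-type contributions, the $-2$ coming from the degree of the canonical part and the usual Riemann--Roch shift), while the remaining $g_{F^P}$ dimensions correspond to the "large" orders collected in the set $B_{\mu,k}$, which by a Weierstrass-gap argument on $F^P$ all exceed $\sum_\mu\nu_{\mu,k}$. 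This establishes eq. (\ref{1idiot}).

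Next I would bring in the tame group $T = \langle g^{p^\ell}\rangle$. Since the flag $\cdots \subset V^k \subset V^{k+1} \subset \cdots$ is $G$-stable and $g$ acts semisimply on each quotient $V^{k+1}/V^k$ with eigenvalues among the $n$-th roots of unity, I can decompose $V^{k+1}/V^k$ into eigenspaces. By the construction in Subsection \ref{FR} (with $\tau y = \zeta_n^r y$ and $r\alpha_\lambda \equiv \lambda \bmod n$), a differential $\omega = h y^{\alpha_\lambda}\, dx$ lies in the $\zeta_n^\lambda$-eigenspace, and its order at $P_{\mu,\nu}$ picks up the contribution $\alpha_\lambda$ coming from $y$, so that $v_{P_{\mu,\nu}}(\omega) \equiv \alpha_\lambda \bmod n$. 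Combining with the mod-$p^\ell$ congruence already obtained and using the CRT (here $(n,p)=1$ is essential), the differentials in $V^{k+1}$ whose vanishing order $x$ at $P_{\mu,\nu}$ satisfies both congruences in eq. (\ref{cond11}) are exactly the $\zeta_n^\lambda$-eigenvectors of $V^{k+1}/V^k$, so their number is $c(\lambda,k)$ by the very definition of $c(\lambda,k)$.

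Finally I would identify $c(\lambda,k)$ with the gap count $\bar c(a_\lambda+1, r_{\mu,k}+1)$. By Riemann--Roch, a holomorphic differential vanishing to order $x$ at $P$ (taking $P$ to be the totally/maximally ramified point so that the $P_{\mu,\nu}$ with $\mu$ corresponding to $P$ is $P$ itself) corresponds to a gap $a = x+1$ of the Weierstrass semigroup at $P$; the congruence $x \equiv \alpha_\lambda \bmod n$, $x \equiv r_{\mu,k} \bmod p^\ell$ translates into $a \equiv \alpha_\lambda+1 \bmod n$ and $a \equiv r_{\mu,k}+1 \bmod p^\ell$, hence $a$ is counted by $\bar c(\alpha_\lambda+1, r_{\mu,k}+1)$ (matching the notational shift $a_\lambda = \alpha_\lambda$ used in the statement). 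The hard part, and the step I would spend the most care on, is the local order computation and the precise determination of the allowable set of $\xi$'s: one must track how the different exponent $\delta_\mu$ and the Valentini--Madan valuation formula $v_{P_{\mu,\nu}}(w_k) = -\sum_j a_j^k\Phi(\mu,j)p^{\ell-j}$ interact with the Riemann--Roch space on $F^P$, and verify that no accidental coincidences spoil the clean description of $B_{\mu,k}$ as a $g_{F^P}$-element set lying above the threshold $\sum_\mu \nu_{\mu,k}$; the eigenspace/CRT bookkeeping and the final translation to gaps are then formal.
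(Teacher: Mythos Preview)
Your plan is essentially the paper's own proof. Both arguments represent a class in $V^{k+1}/V^k$ by $c_kw_k\,dx$ with $c_k\in L_{F^P}(\mathrm{div}_{F^P}(dx)+\sum_\mu\nu_{\mu,k}\bar P_\mu)$, compute $v_{P_{\mu,\nu}}(c_kw_k\,dx)=p^\ell v_{\bar P_\mu}(c_k\,dx)+\delta_\mu+v_{P_{\mu,\nu}}(w_k)=r_{\mu,k}+p^\ell(\nu_{\mu,k}+v_{\bar P_\mu}(c_k\,dx))$, and then determine the admissible $\xi$'s by Riemann--Roch on $F^P$; the paper does this last step explicitly by filtering with $A_j=\mathrm{div}_{F^P}(dx)+\sum_{\mu\neq\mu_0}\nu_{\mu,k}\bar P_\mu+j\bar P_{\mu_0}$ and checking when $\ell(A_j)-\ell(A_{j-1})=1$, which is exactly what your ``Weierstrass-gap argument on $F^P$'' amounts to.

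The one place where your sketch diverges is the mod-$n$ congruence. You argue that $\omega=hy^{\alpha_\lambda}dx$ has $v_{P_{\mu,\nu}}(\omega)\equiv\alpha_\lambda\bmod n$ because ``$y$ contributes $\alpha_\lambda$''. That only works if $v_{P_{\mu,\nu}}(y)\equiv 1\bmod n$ and the contributions of $h$, $dx$ (and $w_k$) are $\equiv 0\bmod n$; this holds at a place totally ramified in $F/F^G$, but is not automatic and needs to be said. The paper sidesteps this by invoking the local-uniformizer argument of Proposition~\ref{r1} (after Morrison--Pinkham): choose $t$ with $\tau(t)=\zeta t$, write $\omega_k\sim t^{a_k-1}dt$, and read off $\tau\omega_k=\zeta^{a_k}\omega_k$. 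That is cleaner and is what you should use to close the equality $c(\lambda,k)=\bar c(\alpha_\lambda+1,r_{\mu,k}+1)$; the rest of your outline matches the paper.
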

\begin{proof}
By lemma \ref{le:6} the differential  $\sum_{\nu=0}^k c_\nu w_\nu dx$ is holomorphic if 
 the 
 elements $c_k$ are in  $\mathcal{L}:=L(\mathrm{div}_{F^P}(dx) + \sum_{\mu=1}^s \nu_{\mu,k} \bar{P}_{\mu}  )$.
What are the possible valuations of such elements at a fixed $\bar{P}_{\mu_0}$?
Fix $\mu_0$ and 
consider the divisors:
\[
 A_{j}:=\mathrm{div}_{F^P}(dx) + \sum_{\mu=1,\mu\neq \mu_0}^s \nu_{\mu,k} \bar{P}_{\mu} +j \bar{P}_{\mu_0},
\]
for $j< \nu_{\mu_0,k}$. We have that $L(A_j) \subset L(A_{\nu_{\mu_0,k}})$. There is an 
element $c_k$ with $v_{\bar{P}_{\mu_0}}(c_k)=-v_{\bar{P}_{\mu_0}}(\mathrm{div}_{F^P}(dx))-j$ if and only if 
$\ell(A_j)-\ell(A_{j-1})=1$:
Indeed, by using Riemann-Roch theorem we see that 
\[
 \ell(A_j)=g_{F^P}-1+\sum_{\mu=1,\mu\neq \mu_0}^s \nu_{\mu,k}+j+
\ell\left(
-\sum_{\mu=1,\mu\neq \mu_0}^s \nu_{\mu,k} \bar{P}_{\mu} -j \bar{P}_{\mu_0}
\right).
\]
Therefore, 
\[
 \mbox{if} \sum_{\mu=1,\mu\neq \mu_0}^s \nu_{\mu,k}+j-1\geq 0 \mbox{ then } \ell\left(
-\sum_{\mu=1,\mu\neq \mu_0}^s \nu_{\mu,k} \bar{P}_{\mu} -j \bar{P}_{\mu_0}
\right)=0
\]
and 
$\ell(A_j)-\ell(A_{j-1})=1$ and there is an element $c_k$ with valuation at $\bar{P}_{\mu_0}$ equal to {$-v_{\bar{P}_{\mu_0}}(\mathrm{div}_{F^P}(dx))-j$}. 
This proves that possible valuations $v=-v_{\bar{P}_{\mu_0}}(\mathrm{div}_{F^P}(dx))-j$ of elements in $\mathcal{L}$ at  $\bar{P}_{\mu_0}$ satisfy
\[
 -\nu_{\mu_0,k}-v_{\bar{P}_{\mu_0}}(\mathrm{div}_{F^P}(dx)) \leq v \leq \sum_{\mu=1,\mu\neq \mu_0}^s \nu_{\mu,k}-1-v_{\bar{P}_{\mu_0}}(\mathrm{div}_{F^P}(dx)),
\]
i.e.
\begin{equation} \label{rem1}
 0 \leq \nu_{\mu_0,k}-j \leq \sum_{\mu=1}^s \nu_{\mu,k}-1.
\end{equation}
The valuation at $P_{\mu_0}$ of the  differential $c_k w_k dx$ of $F$ equals:
\begin{equation} \label{rem2}
 p^{\ell} v_{\bar{P}_{\mu_0}}(c_k dx) + \delta_{\mu_0}+v_{P_{\mu_0,\nu}}(w_k).
\end{equation}
Recall that $\delta_{\mu_0}+v_{P_{\mu_0,\nu}}(w_k)=r_{\mu_0,k}+p^{\ell} \nu_{\mu_0,k}$ by eq.  (\ref{rem}), so 
(\ref{rem2}) becomes 
\[
 p^{\ell} v_{\bar{P}_{\mu_0}}(c_k dx)+r_{\mu_0,k}+p^{\ell} \nu_{\mu_0,k}
\]
which in turn by using (\ref{rem1}) implies  
that  the possible valuations of differentials in $V^{k+1}$ contain the set 
\[
 r_{\mu_0,k}+p^{\ell}\xi, 0\leq \xi< \sum_{\mu=1}^s \nu_{\mu,k}-1.
\]
If $g_{F^P}=0$ then there are no other possible valuations for the elements $c_k$
since the above valuations are different, the corresponding functions are 
linear independent and have the correct dimension given in lemma \ref{le:6}. 
If $g_{F^P}>0$ then there are 
 $g_{F^P}$ more possible valuations, but their exact values can not be easily described.
Indeed,
 notice that always $\ell(A_j)-\ell(A_{j-1}) \leq 1$ by  \cite[I.4.8]{StiBo}.

Using lemma \ref{degE} and proposition \ref{comp-c} we compute that the dimension of the differentials satisfying the 
conditions given in (\ref{cond11}), is equal to  
 $c(k,\lambda)$.

It is a well known application of the Riemann-Roch theorem that 
the existence of a differential with root of order $a-1$ at $P$ implies 
that $a$ is a gap at $P$. 
 Therefore if we add 1 to   the natural numbers 
appearing in eq. (\ref{1idiot}) then we obtain   all the gaps at $P$ coming from holomorphic differentials
 in $V^{k+1}$. 
All of them are equivalent to $r_{\mu,k}+1$  modulo $p^\ell$. 
Moreover, if $a$ is a gap at $P$ then there is a onedimensional subspace 
of $V$ such that the action of the tame part is given by 
$\zeta \mapsto \zeta^a$ \cite[th. 1.3]{MorrisonPinkham}. This proves the 
equality 
$c(\lambda,k)=\bar{c}(a_\lambda+1,r_{\mu,k}+1)$.
%
%
%
\end{proof}
{
\begin{remark}
Notice, that  now we are able to describe completely $\Sigma$ at $P_{\mu,\nu}$ by the method introduced by 
Morrison and H. Pinkham \cite{MorrisonPinkham} and explained in eq. (\ref{semnu}), when $P_{\mu,\nu}$
 ramifies completely. Indeed :
\begin{enumerate}
\item  The numbers $c(\lambda,k)=\bar{c}(a_\lambda+1,r_{\mu,k}+1)$ equal to  the number of gaps $x+1$ for which $x+1\equiv i \mod d$
and thus from the Chinese remainder theorem are equivalent to $\alpha_\lambda+1$ (or equivalently, see (2),  to $\lambda+1$) 
 and $r_{\mu,k}+1$ modulo $n$ and $p^\ell$ respectively.
\item  $r_{\mu,k}$ forms a complete system modulo $p^{\ell}$ as $k$ takes all the values $0,\ldots, p^\ell-1$,
and thus takes all the values from 1 to $d-1$. Moreover, let  $r=1$
to eq. (\ref{akdef}) (we use this argument widely through this paper). Then, in the same way we see that
 $\alpha_\lambda$ forms a complete system modulo $n$, as $\lambda$ runs through $0,\ldots, n-1$.
\end{enumerate}
\end{remark}
}
\section{The case of a cyclic $p$-group}
%
%
%
We will now focus on the case of cyclic extensions of the rational function field of order $p^\ell$.
We will also assume that every ramified place is ramified completely.
In this case we construct explicitly a basis of holomorphic differentials 
as follows:

We denote the ramified places of $K(x)$, by  $\bar{Q}_i=(x-\alpha_i),\ 1\leq i\leq s$,
 since in a rational function field  every ramified place 
corresponds to an irreducible polynomial, which is linear since the 
 field $K$ is algebraically closed.   We set
\begin{eqnarray*}
 g_k (x)=\prod_{i=1}^{s} (x-\alpha_i)^{\nu_{ik}}.
\end{eqnarray*}
\begin{definition}\label{cyclic}
For $k=0,1,\ldots,p^\ell -1,$ we define
\[
\Gamma_k:=\sum_{i=1}^s \nu_{ik}.
\]
\end{definition}

\begin{proposition} \label{holbasis-cyc} 
Let $X$ be a cyclic extension of degree $p^\ell$ of the rational function field.
The set 
\[
\left\{
\omega_{k\nu}^{(\alpha_i)} =(x-\alpha_i)^{\nu^{(k)}} g_{k}(x)^{-1}w_{k}dx: 0 \leq \nu^{(k)} \leq \Gamma_k -2, 0\leq k \leq p^n-2\right\}
\]
forms a basis for the set of holomorphic differentials for a cyclic extension of the 
rational function field of order $p^\ell$.
\end{proposition}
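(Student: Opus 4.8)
The plan is to exhibit each $\omega_{k\nu}^{(\alpha_i)}$ as a holomorphic differential lying in $V^{k+1}$ but not in $V^{k}$, to identify the classes of these differentials in the successive quotients $V^{k+1}/V^{k}$ with an explicit basis of a Riemann--Roch space on $\mathbb{P}^1$, and then to assemble a basis of $V$ out of the filtration $0=V^{0}\subset V^{1}\subset\cdots\subset V^{p^\ell}=V$. Throughout, for each $k$ with $0\le k\le p^\ell-2$ I fix one branch place $\alpha_i$ (the choice is immaterial and may depend on $k$); letting $\alpha_i$ range over all branch places gives a generating set of the same span, while any fixed choice singles out a genuine basis.

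\emph{Holomorphy.} Since every branch place of $K(x)$ is finite and totally ramified of index $p^\ell$, while the place at infinity is unramified, one has $\mathrm{div}_X(dx)=\mathrm{Diff}(F/K(x))-2\,\pi^{*}(\infty)$ with $\pi\colon X\to X/P=\mathbb{P}^1$; hence $v_{P_{\mu,\nu}}(dx)=\delta_\mu$ at a place over $\bar{Q}_\mu$ and $v_{P_\infty}(dx)=-2$ at a place over $\infty$. Using $v_{P_{\mu,\nu}}(x-\alpha_\mu)=p^\ell$, $v_{P_{\mu,\nu}}(g_k)=p^\ell\nu_{\mu,k}$, the valuation formula of Lemma \ref{properties}(3) for $v_{P_{\mu,\nu}}(w_k)$, and the definition $\nu_{\mu,k}=\lfloor(\delta_\mu+v_{P_{\mu,\nu}}(w_k))/p^\ell\rfloor$ of eq. (\ref{defnu}), a direct computation gives $v_{P_{\mu,\nu}}\bigl(\omega_{k\nu}^{(\alpha_i)}\bigr)=r_{\mu,k}\ge 0$ when $\alpha_i\neq\alpha_\mu$ and $=r_{\mu,k}+p^\ell\nu^{(k)}\ge 0$ when $\alpha_i=\alpha_\mu$, where $r_{\mu,k}$ is the remainder of eq. (\ref{rem}). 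At a place over $\infty$ the valuation is $\Gamma_k-2-\nu^{(k)}+v_{P_\infty}(w_k)\ge 0$, because $\deg g_k=\Gamma_k$, because $0\le\nu^{(k)}\le\Gamma_k-2$, and because $w_k$, being built from the standard Artin--Schreier generators of $F/K(x)$ (Lemma \ref{properties} and the construction in \cite[sec.~1]{vm}), is regular outside the branch locus and in particular at $\infty$. At the remaining places (unramified, finite, not over a branch point) every factor is a unit or a regular differential, so the valuation is $\ge 0$ there too. Thus $\omega_{k\nu}^{(\alpha_i)}\in V$; moreover its top coefficient $c_k=(x-\alpha_i)^{\nu^{(k)}}g_k(x)^{-1}$ is nonzero, so by Lemma \ref{properties}(2) it lies in $V^{k+1}\setminus V^{k}$.

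\emph{Graded pieces and assembly.} Fix $k$. By Lemma \ref{le:6}, applied with $F^{P}=K(x)$ and $g_{F^{P}}=0$, the assignment $\omega\mapsto c_k$ defines an injection $V^{k+1}/V^{k}\hookrightarrow L_{K(x)}\bigl(\mathrm{div}(dx)+\sum_{\mu}\nu_{\mu,k}\bar{Q}_\mu\bigr)$, and both spaces have dimension $\Gamma_k-1$, so it is an isomorphism. The target is $L\bigl(-2\infty+\sum_i\nu_{ik}(x-\alpha_i)\bigr)$, whose elements are the functions $p(x)/g_k(x)$ with $\deg p\le\Gamma_k-2$; since $\{(x-\alpha_i)^{\nu^{(k)}}:0\le\nu^{(k)}\le\Gamma_k-2\}$ is a $K$-basis of the polynomials of degree $\le\Gamma_k-2$, the functions $(x-\alpha_i)^{\nu^{(k)}}g_k(x)^{-1}$ form a basis of this space, and hence the classes of the $\omega_{k\nu}^{(\alpha_i)}$ form a $K$-basis of $V^{k+1}/V^{k}$. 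Now I run the standard filtration argument: a union over $k$ of lifts of bases of the graded pieces is a basis of the top of the filtration. Here $V^{p^\ell}=V$ because $(\sigma-1)^{p^\ell}=\sigma^{p^\ell}-1=0$ in characteristic $p$, and $V^{p^\ell}=V^{p^\ell-1}$ because the top graded piece vanishes: as every ramified place is totally ramified we have $r=0$, and Proposition \ref{pro3} with $k=p^\ell-p^r=p^\ell-1$, $n=1$ and $\lambda=0$ gives $c(0,p^\ell-1)=g_{F^G}=g_{\mathbb{P}^1}=0$, i.e. $\dim V^{p^\ell}/V^{p^\ell-1}=0$. Therefore the differentials $\omega_{k\nu}^{(\alpha_i)}$ with $0\le k\le p^\ell-2$ and $0\le\nu^{(k)}\le\Gamma_k-2$ form a basis of $V^{p^\ell-1}=V$. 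As a consistency check, their number is $\sum_{k=0}^{p^\ell-2}(\Gamma_k-1)=\sum_{k=0}^{p^\ell-1}\dim\bigl(V^{k+1}/V^{k}\bigr)=\dim_K V$, the genus of $X$, in accordance with Riemann--Hurwitz.

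The main obstacle is the holomorphy verification of the second paragraph: one must track the different exponents $\delta_\mu$, the valuations of $w_k$ at the ramified places (Lemma \ref{properties}(3)) and at $\infty$, and the contributions of $g_k$ and of $dx$, in order to see that the total valuation is exactly the nonnegative remainder $r_{\mu,k}$ and that nothing goes wrong over $\infty$. Once this is in place, the identification $V^{k+1}/V^{k}\cong L\bigl(\mathrm{div}(dx)+\sum_\mu\nu_{\mu,k}\bar{Q}_\mu\bigr)$, the explicit description of this Riemann--Roch space on $\mathbb{P}^1$, and the collapse of the top filtration step are immediate from Lemma \ref{le:6} and Proposition \ref{pro3}.
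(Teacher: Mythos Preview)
Your proof is correct. The paper's own proof is essentially a citation --- it refers the reader to \cite[Lemma 10]{karan} (with $m=1$) and to Garcia's argument in \cite[Theorem 2]{garciaelab}, and says the proof is identical to the one given there. You have instead supplied a self-contained argument tailored to the machinery already developed in this paper: you verify holomorphy by a direct valuation computation, then exploit the filtration $V^{0}\subset V^{1}\subset\cdots$ together with Lemma~\ref{le:6} to identify each graded piece $V^{k+1}/V^{k}$ with the explicit Riemann--Roch space $L_{K(x)}\bigl(-2\infty+\sum_\mu\nu_{\mu,k}\bar Q_\mu\bigr)$ on $\mathbb{P}^1$, observe that $(x-\alpha_i)^{\nu^{(k)}}g_k(x)^{-1}$ gives a basis of the latter, and finally kill the top step via Proposition~\ref{pro3}. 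This is more structural than the direct route taken in the cited references (holomorphy, linear independence via distinct $w_k$-leading terms, and a genus count by Riemann--Hurwitz), and it has the advantage of making transparent why the index range stops at $k\le p^\ell-2$: the case $k=p^\ell-1$ falls outside the hypothesis of Lemma~\ref{le:6} and is handled separately by $c(0,p^\ell-1)=g_{K(x)}=0$.

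One small point worth making explicit: your holomorphy check at the places above $\infty$ uses that $w_k$ is regular there. This is indeed a consequence of the standard-form Artin--Schreier tower in \cite[sec.~1]{vm} (each generator $y_j$ satisfies $y_j^p-y_j=b_j$ with $b_j$ regular outside the branch locus, forcing $v(y_j)\ge 0$ at unramified places), but it is not recorded in Lemma~\ref{properties}, which only lists the valuations at the ramified $P_{\mu,\nu}$. It would strengthen the write-up to say this in one line rather than leave it implicit in the citation.
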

\begin{proof}
 We take the basis of \cite[Lemma 10]{karan}, set $m=1$ and modify it in order to evaluate
 holomorphic differentials in the ramified primes of the extension.
The same construction is given by Garcia in \cite[Theorem 2, Claim]{garciaelab} 
where  the  elementary abelian, totally ramified case is studied. 
The proof is identical to the one given there.
\end{proof}

Keep in mind that the natural number $i$ is a gap at $P$ if and only if there is a holomorphic differential 
$\omega$ with root at $P$ of order $i-1$.

\begin{lemma} \label{resi-li}
 The remainders  $r_{\mu,i}$ for different values of $i$ are different modulo $p^{\ell}$ and 
form a full set of representatives modulo $p^\ell$.
\end{lemma}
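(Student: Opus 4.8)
\textbf{Proof plan for Lemma \ref{resi-li}.}
The plan is to unwind the definition of $r_{\mu,i}$ and reduce the statement to a purely combinatorial fact about the $p$-adic expansions of the exponents $i$, using the explicit formula for $v_{P_{\mu,\nu}}(w_k)$ supplied by Lemma \ref{properties}(3). Recall that in the totally ramified cyclic $p$-group case $e_\mu=p^\ell$, so by definition \eqref{rem} the remainder $r_{\mu,i}$ is the reduction of $\delta_\mu+v_{P_{\mu,\nu}}(w_i)$ modulo $p^\ell$. First I would fix a ramified place $\bar{Q}_\mu$ and write $i=a_1^i+a_2^ip+\cdots+a_\ell^i p^{\ell-1}$ with $0\le a_j^i\le p-1$; then Lemma \ref{properties}(3) gives
\[
v_{P_{\mu,\nu}}(w_i)=-\sum_{j=1}^\ell a_j^i\,\Phi(\mu,j)\,p^{\ell-j},
\]
where each $\Phi(\mu,j)$ is prime to $p$.

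The key step is then to show that the map $i\mapsto v_{P_{\mu,\nu}}(w_i)\bmod p^\ell$ is a bijection from $\{0,1,\dots,p^\ell-1\}$ onto $\Z/p^\ell\Z$; adding the fixed constant $\delta_\mu$ is an affine shift and does not affect bijectivity, so this suffices. Since the source and target have the same cardinality $p^\ell$, it is enough to prove injectivity. Suppose $v_{P_{\mu,\nu}}(w_i)\equiv v_{P_{\mu,\nu}}(w_{i'}) \bmod p^\ell$; I would argue digit by digit on the $p$-adic expansions, peeling off residues modulo $p$, then modulo $p^2$, and so on. Concretely, modulo $p$ the only surviving term is $-a_\ell^i\Phi(\mu,\ell)p^{\ell-1}$ versus $-a_\ell^{i'}\Phi(\mu,\ell)p^{\ell-1}$; wait — that term is divisible by $p^{\ell-1}$, so one should instead compare the two sums after dividing by the appropriate power of $p$. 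The cleanest bookkeeping is to set $n_i:=-v_{P_{\mu,\nu}}(w_i)=\sum_j a_j^i\Phi(\mu,j)p^{\ell-j}$ and observe that reducing $n_i \bmod p^{\ell-j+1}$ isolates, inductively, the contribution of $a_\ell^i,a_{\ell-1}^i,\dots,a_j^i$; since $\Phi(\mu,j)$ is a unit mod $p$, at each stage the digit $a_j^i$ is uniquely determined by $a_{j+1}^i,\dots,a_\ell^i$ and the residue of $n_i$. Hence $n_i\equiv n_{i'}\bmod p^\ell$ forces $a_j^i=a_j^{i'}$ for all $j$, i.e. $i=i'$. This gives injectivity, hence bijectivity.

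Finally, since $i\mapsto r_{\mu,i}$ is, up to the affine shift by $\delta_\mu$, the composition of $i\mapsto n_i\bmod p^\ell$ with negation — both bijections of $\Z/p^\ell\Z$ — it is itself a bijection onto $\Z/p^\ell\Z$; equivalently, the $r_{\mu,i}$ are pairwise distinct modulo $p^\ell$ and form a full set of representatives. I expect the main obstacle to be purely notational: getting the inductive digit-extraction argument clean, i.e. carefully matching which power of $p$ isolates which $p$-adic digit, given that the term $a_j^i\Phi(\mu,j)p^{\ell-j}$ carries the factor $p^{\ell-j}$ rather than $p^{j-1}$. One could also phrase this more slickly by noting that the matrix expressing $(n_i \bmod p, \lfloor n_i/p\rfloor \bmod p, \dots)$ in terms of $(a_1^i,\dots,a_\ell^i)$ is upper triangular over $\Z/p\Z$ with unit diagonal entries $\Phi(\mu,j)$, hence invertible — but the elementary induction is already short enough to write out directly.
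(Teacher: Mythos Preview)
Your proposal is correct and follows the same approach as the paper: both use the explicit formula $v_{P_{\mu,\nu}}(w_k)=-\sum_{j=1}^\ell a_j^k\,\Phi(\mu,j)\,p^{\ell-j}$ from Lemma~\ref{properties}(3) together with the fact that each $\Phi(\mu,j)$ is a $p$-adic unit. In fact your digit-by-digit (equivalently, upper-triangular) argument supplies exactly the injectivity step that the paper's two-line proof leaves implicit, so your version is the more complete of the two.
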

\begin{proof}
 Observe first that the valuations of the functions $w_k$ as $k$ runs over $0,\ldots,p^\ell-2$ are 
all different, since 
\[
 v_{P_{\mu,\nu}}(w_k)=-\sum_{j=1}^\ell a_j^k \Phi(\mu,j) p^{\ell-j}.
\]
Therefore the values $\delta_\mu+v_{P_{\mu,\nu}}(w_k)=-\sum_{j=1}^\ell a_j^k \Phi(\mu,j) p^{\ell-j}$ take 
all possible values modulo $p^\ell$.
\end{proof}


\begin{definition}
 For every natural number $0\leq a < p^\ell$ define by $\psi(a)$ the natural number such that 
\[
 r_{\psi(a),\mu}=a.
\]
Such a number exists by lemma \ref{resi-li}. 
\end{definition}

%

\begin{remark}
Recall that $r_{\mu,k}$ was defined in eq. (\ref{rem}) to denote the remainder of the 
division of $\delta_\mu+v_{P_{\mu,\nu}}(w_k)$ by $p^\ell$.
Boseck in his seminal paper \cite[Satz 18]{boseck},  where the $G=\Z/p\Z$ case is studied, states that as  
as $k$ takes all the values $0\leq k \leq p-2$ the remainder of the Boseck's basis 
construction $r_{\mu,k}$ takes all the 
values $0\leq r_{\mu,k} \leq p -2$  and thus all the numbers $1,\ldots,p-1$ are gaps.
 This is not entirely correct as we will show in example 
\ref{31acc}. The problem appears if there  is exactly  one ramified place in the Galois 
extension.  
\end{remark}

\begin{lemma} \label{smallGaps}
 If all $\Gamma_k \geq 2$ then all numbers  $1,\ldots,p^\ell-1$ are gaps.
If  there exist  Boseck invariants $\Gamma_k=1$, then the set of gaps smaller than 
$p^\ell$ is exactly the set $\{r_{\mu,k}: 0 \leq k \leq p^\ell-2, \Gamma_k \geq 2\}$.
\end{lemma}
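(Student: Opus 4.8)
The plan is to identify the small gaps (those $<p^\ell$) with the orders of zeros at the ramified place $P$ of holomorphic differentials living in the filtration pieces $V^{k+1}$, and to track exactly when a given residue class modulo $p^\ell$ is actually realized by such a differential. First I would recall from Proposition \ref{holbasis-cyc} that a basis of holomorphic differentials is given by the $\omega_{k\nu}^{(\alpha_i)}$, so that for each $k$ with $0\le k\le p^\ell-2$ the differentials $(x-\alpha_i)^{\nu^{(k)}}g_k(x)^{-1}w_k\,dx$ contribute exactly $\Gamma_k-1$ independent differentials (the range $0\le\nu^{(k)}\le\Gamma_k-2$ is nonempty precisely when $\Gamma_k\ge 2$). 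By the local computation leading to Proposition \ref{numb-gaps} (specifically eq. (\ref{rem2}) and the totally ramified case $g_{F^P}=0$), the orders of zero at $P$ of the differentials in $V^{k+1}$ are $r_{\mu,k}+p^\ell\xi$ with $\xi\in\{0,1,\dots,\Gamma_k-2\}$, since $\sum_\mu\nu_{\mu,k}=\Gamma_k$ in the totally ramified case and $g_{F^P}=0$ kills the extra set $B_{\mu,k}$. Hence the gaps arising from $V^{k+1}$ are precisely the integers $r_{\mu,k}+1+p^\ell\xi$ for $0\le\xi\le\Gamma_k-2$, and the smallest of these, namely $r_{\mu,k}+1$, is a gap below $p^\ell$ exactly when $\Gamma_k\ge 2$ (when $\Gamma_k=1$ the set of $\xi$'s is empty and $k$ contributes no differential at all, while when $\Gamma_k=0$ likewise).

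Next I would assemble these contributions over all $k$. By Lemma \ref{resi-li} the remainders $r_{\mu,k}$, as $k$ runs over $0,\dots,p^\ell-2$, form a full set of representatives of $\Z/p^\ell\Z$; in particular they are pairwise distinct. Combined with the previous paragraph this shows that the set of gaps strictly smaller than $p^\ell$ is exactly $\{\,r_{\mu,k}+1 : 0\le k\le p^\ell-2,\ \Gamma_k\ge 2\,\}$: each such $r_{\mu,k}+1$ lies in $\{1,\dots,p^\ell-1\}$ and is a gap by the Riemann–Roch criterion (existence of a differential with a zero of that order minus one), and conversely any gap $a$ with $1\le a\le p^\ell-1$ corresponds to a holomorphic differential whose order of zero at $P$ is $a-1<p^\ell-1$, which therefore lies in some $V^{k+1}$ with that differential equal, up to the trigonal reduction, to a basis element $w_k\,dx$ times a function of $F^P$ with a pole of order $<\Gamma_k$ at $\bar P_\mu$ — forcing $a-1\equiv r_{\mu,k}\bmod p^\ell$ and $\Gamma_k\ge2$.

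For the first assertion: if every $\Gamma_k\ge 2$ then the displayed set is $\{r_{\mu,k}+1:0\le k\le p^\ell-2\}$, which by Lemma \ref{resi-li} (the $r_{\mu,k}$ being a complete residue system mod $p^\ell$, hence running over $\{0,1,\dots,p^\ell-1\}$ as $k$ runs over $p^\ell$ values — and here we have $p^\ell-1$ values of $k$, missing exactly the one giving $r_{\mu,k}=p^\ell-1$, i.e. missing the gap $p^\ell$ itself) is precisely $\{1,2,\dots,p^\ell-1\}$. So all of $1,\dots,p^\ell-1$ are gaps. The main obstacle I anticipate is bookkeeping the off-by-one issues cleanly: matching the index range $0\le k\le p^\ell-2$ of the Valentini–Madan basis against the full residue system mod $p^\ell$ furnished by Lemma \ref{resi-li}, and making sure that the single ``missing'' value of $k$ corresponds to the residue $r_{\mu,k}=p^\ell-1$ (equivalently, to $w_0$, the trivial eigenvector, whose associated ``gap'' $p^\ell$ is excluded from the range $<p^\ell$) — and similarly checking that when some $\Gamma_k=1$ the corresponding residue class simply drops out with no replacement, which is exactly the phenomenon behind the inaccuracy in Boseck's \cite[Satz 18]{boseck} flagged in the preceding remark. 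Everything else is a direct consequence of Proposition \ref{numb-gaps} and Lemma \ref{resi-li}.
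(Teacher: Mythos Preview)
Your approach is essentially the paper's own: both arguments rest on the explicit basis of Proposition~\ref{holbasis-cyc}, the valuation computation behind Proposition~\ref{numb-gaps} (orders of zero $r_{\mu,k}+p^\ell\xi$ with $0\le\xi\le\Gamma_k-2$), and Lemma~\ref{resi-li} to see that the residues $r_{\mu,k}$ are pairwise distinct modulo $p^\ell$, so that each $k$ with $\Gamma_k\ge 2$ contributes exactly one small gap and each $k$ with $\Gamma_k=1$ contributes none. The paper's proof is just a two-line compression of what you wrote.

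One small slip worth correcting: the index omitted from the basis range $0\le k\le p^\ell-2$ is $k=p^\ell-1$, i.e.\ $w_{p^\ell-1}$, not $w_0$ (the trivial eigenvector $w_0$ is very much present in the basis). The paper handles the off-by-one issue you flag simply by asserting that as $k$ runs through $0,\ldots,p^\ell-2$ the $r_{\mu,k}$ run through $0,\ldots,p^\ell-2$; this is the refinement of Lemma~\ref{resi-li} you would need to verify, and it is the only point where your sketch is not yet nailed down.
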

\begin{proof}
As $k$ runs in $0\leq k \leq p^\ell-2$ the $r_{\mu,k}$ run in $0,\ldots,p^\ell-2$.
But the $\Gamma_k$ that are equal to $1$ have to be excluded since they give not rise 
to a holomorphic differentials in proposition \ref{holbasis-cyc}, see \cite[Eq. (21)]{karan} 
and  example \ref{31acc}.
\end{proof}
\begin{remark}
Notice that elements $\Gamma_k=1$ can appear only for primes
 $p \geq \Phi(\mu,j)$ and 
only if there is only one ramified place. 
\end{remark}

\begin{example} \label{31acc}  We consider the now the case of an Artin-Schreier extension of the 
function field $k(x)$, of the form $y^p-y=1/x^m$. 
In this extension only the place $(x-0)$ is ramified with different exponent $\delta_1=(m+1)(p-1)$. 
The Boseck invariants in this case are 
\[
 \Gamma_k=\lf \frac{(m+1)(p-1)-km}{p} \rf \;\; \mbox{ for } k=0,\ldots,p-2.
\]
The Weierstrass semigroup is known \cite{StiII} to be  
$
m \Z_+ + p \Z_+$. 
Let us now find the small gaps by using lemma \ref{smallGaps}.
If $p<m$ then all numbers $1,\ldots,p-1$ are gaps. If $p > m$ then $m$ is a pole number smaller 
than $p$.  
Indeed, $\Gamma_{p-2}=1$ and the remainder of the division of 
$(m+1)(p-1)-(p-2)m$ by $p$ is $r_{p-2}=m-1$. But then 
$r_{p-2}+1=m$ is not a gap.  
\end{example}

 \def\cprime{$'$}
\providecommand{\bysame}{\leavevmode\hbox to3em{\hrulefill}\thinspace}
\providecommand{\MR}{\relax\ifhmode\unskip\space\fi MR }
\providecommand{\MRhref}[2]{%
  \href{http://www.ams.org/mathscinet-getitem?mr=#1}{#2}
}
\providecommand{\href}[2]{#2}

%
\end{document}